\begin{document}
\title{Quillen's Theorem A and the Whitehead theorem for bicategories}
\author{Niles Johnson}
\address{Department of Mathematics\\
	 The Ohio State University at Newark\\
	 1179 University Drive\\ 
	 Newark, OH 43055, USA}
       \email{johnson.5320@osu.edu\\
       yau.22@osu.edu}
\author{Donald Yau}


\begin{abstract}
  We prove a bicategorical analogue of Quillen's Theorem A.  As an
  application, we deduce the well-known result that a pseudofunctor is
  a biequivalence if and only if it is essentially surjective on
  objects, essentially full on 1-cells, and fully faithful on 2-cells.
\end{abstract}


\subjclass[2010]{18D05, 18A25, 55P10}
\keywords{Bicategories, biequivalences, lax slice.}

\date{02 October, 2019}
\maketitle


\section{Introduction}\label{sec:intro} 

Quillen's Theorems A and B give conditions which imply that a functor
of categories $F\cn \C \to \D$ induces a homotopy equivalence,
respectively fibration, on geometric realizations of nerves
\cite{quillenKI}.  Bicategorical analogues of Quillen's Theorem B have
been discussed in \cite{chr} and depend on a notion of fibration
for bicategories; see, for example, \cite{bakovicFib,buckley}.  A biequivalence is
not necessarily a fibration, and therefore a separate treatment of
Theorem A is needed.

We prove a bicategorical Quillen Theorem A
(\cref{theorem:Quillen-A-bicat}) that generalizes to bicategories the
essential algebraic content of the original result.  Our proof of
Theorem A depends on a lax slice bicategory and a version of terminal
object we call \emph{inc-lax terminal} for \underline{in}itial
\underline{c}omponents (see \cref{definition:inc-lax-terminal}).

We describe the lax slice construction for a general lax functor of
bicategories in \cref{sec:lax-slice} and prove that it is a
bicategory.  Our construction is similar but not identical to notions
which have appeared in the literature, particularly \cite[Construction
  4.2.1]{buckley}.

\cref{sec:lax-terminal} is devoted to the definition and properties of
lax terminal objects.  Our notion of inc-lax terminal is stronger than
standard notions of terminal object in a bicategory, but our proof
shows that the slice bicategories over a biequivalence have inc-lax
terminal objects.

We state and prove the Bicategorical Quillen Theorem A
in \cref{sec:Quillen-A-bicat}.  We show that, given a lax functor
whose lax slices have the appropriate inc-lax terminal data, one can
construct a reverse lax functor together with strong transformations
between their composites and the respective identities.  We show,
moreover, that if the given lax functor is a pseudofunctor, then the
reverse lax functor we construct is an inverse biequivalence.

\cref{sec:Whitehead-bicat} contains our main application,
\cref{theorem:whitehead-bicat}: the local characterization of
biequivalences as those pseudofunctors which are essentially
surjective on objects, essentially full on 1-cells, and fully faithful
on 2-cells (see \cref{definition:es-ef-ff}).  This is a bicategorical
analogue of Whitehead's theorem for topological spaces, which says
that a continuous function between CW complexes is a homotopy
equivalence if and only if it is a weak homotopy equivalence.

The corresponding result for 1-categories is the well-known statement
that a functor (of 1-categories) is an equivalence if and only if it
is essentially surjective on objects and fully faithful on morphisms.
Proofs appear in many standard texts, e.g., \cite{maclane,riehl}.  Although
the result for bicategories is also well-known, the authors do not
know of a self-contained proof in the literature.  Our naming of
\cref{theorem:whitehead-bicat} as a Whitehead theorem follows the
thesis of Schommer-Pries \cite{schommer-pries}, which proves a
Whitehead theorem for symmetric monoidal bicategories.  The argument
in \cite{schommer-pries} first replaces each symmetric monoidal
bicategory with an equivalent 1-skeletal symmetric monoidal
bicategory, and then proves the Whitehead theorem in the 1-skeletal
case.

A proof of the general (non monoidal) Whitehead theorem can be
extracted from \cite{schommer-pries}, but our approach is different.
Our bicategorical Quillen Theorem A uses a contractibility of
fibers---in the form of inc-lax terminal objects---to construct an
inverse pseudofunctor directly.  Moreover, our proof of the
Bicategorical Whitehead Theorem \ref{theorem:whitehead-bicat} does not
depend on the Bicategorical Coherence Theorem
\cite{maclane-pare,street_cat-structures}, which asserts each
bicategory $\B$ is retract biequivalent to a $2$-category $\A$.

Our treatment also clarifies the role of choice in this result.  Every
instance of Whitehead's Theorem---whether topological or
algebraic---requires the axiom of choice.
Our method of proof isolates its use to
\cref{proposition:lax-slice-lax-terminal}, which chooses the inc-lax
terminal data for a pseudofunctor that is assumed to be essentially
surjective, essentially full, and fully faithful.  If one has
constructions of these data by some other means, our application of
the Bicategorical Quillen Theorem A yields a construction of an
inverse biequivalence.

This work makes extensive use of pasting diagrams in bicategories, and
was one of the motivations for the authors' concurrent work \cite{JYpasting},
which gives an elementary proof of a bicategorical pasting theorem.
The pasting diagrams we use below are pasting diagrams in the sense
defined there, and each has a unique composite by the Bicategorical
Pasting Theorem of \cite{JYpasting}.


\section{Background}\label{sec:background}

This section contains the bicategorical background needed for our
work.  We include full details of the definitions since they will be
used in subsequent sections.

\begin{definition}\label{def:bicategory}
A \emph{bicategory} is a tuple $\bigl(\B, 1, c, a, \ell, r\bigr)$ consisting of the following data.
\begin{enumerate}[label=(\roman*)]
\item $\B$ is equipped with a collection $\Ob(\B) = \B_0$, whose elements are called \emph{objects} in $\B$.  If $X\in \B_0$, we also write $X\in \B$.
\item For each pair of objects $X,Y\in\B$, $\B$ is equipped with a category $\B(X,Y)$, called a \emph{hom category}.
\begin{itemize}
\item Its objects are called \emph{$1$-cells}, and its morphisms are called \emph{$2$-cells} in $\B$.
\item Composition and identity morphisms in $\B(X,Y)$ are called \emph{vertical composition} and \emph{identity $2$-cells}, respectively.
\item For a $1$-cell $f$, its identity $2$-cell is denoted by $1_f$.
\end{itemize}
\item For each object $X\in\B$, $1_X : \boldone \to \B(X,X)$ is a functor, which we identify with the $1$-cell $1_X(*)\in\B(X,X)$, called the \emph{identity $1$-cell of $X$}.
\item For each triple of objects $X,Y,Z \in \B$, 
\[c_{XYZ} : \B(Y,Z) \times \B(X,Y) \to \B(X,Z)\]
is a functor, called the \emph{horizontal composition}.  For $1$-cells $f \in \B(X,Y)$ and $g \in \B(Y,Z)$, and $2$-cells $\alpha \in \B(X,Y)$ and $\beta \in \B(Y,Z)$, we use the notations
\[c_{XYZ}(g,f) = gf \andspace c_{XYZ}(\beta,\alpha) = \beta * \alpha.\]
\item For objects $W,X,Y,Z \in \B$, 
\[a_{WXYZ} : c_{WXZ} \bigl(c_{XYZ} \times \Id_{\B(W,X)}\bigr) \to c_{WYZ}\bigl(\Id_{\B(Y,Z)} \times c_{WXY}\bigr)\]
is a natural isomorphism, called the \emph{associator}.
\item For each pair of objects $X,Y\in \B$,
\[\begin{tikzcd}
c_{XYY} \bigl(1_Y \times \Id_{\B(X,Y)}\bigr) \rar{\ell_{XY}} & \Id_{\B(X,Y)}
& c_{XXY}\bigl(\Id_{\B(X,Y)} \times 1_X\bigr) \lar[swap]{r_{XY}}\end{tikzcd}\]
are natural isomorphisms, called the \emph{left unitor} and the \emph{right unitor}, respectively.
\end{enumerate}
The subscripts in $c$ will often be omitted.  The subscripts in $a$, $\ell$, and $r$ will often be used to denote their components.  The above data is required to satisfy the following two axioms for $1$-cells $f \in \B(V,W)$, $g \in \B(W,X)$, $h \in \B(X,Y)$, and $k \in \B(Y,Z)$.
\begin{description}
\item[Unity Axiom] The middle unity diagram
\begin{equation}\label{bicat-unity}
\begin{tikzcd}[column sep=small] (g 1_W)f \arrow{rr}{a} \arrow{rd}[swap]{r_g * 1_f} && g(1_W f) \arrow{ld}{1_g * \ell_f}\\ & gf
\end{tikzcd}
\end{equation}
in $\B(V,X)$ is commutative.
\item[Pentagon Axiom]
The diagram
\begin{equation}\label{bicat-pentagon}
\begin{tikzpicture}[commutative diagrams/every diagram]
\node (P0) at (90:2cm) {$(kh)(gf)$};
\node (P1) at (90+72:2cm) {$((kh)g)f$} ;
\node (P2) at (220:1.5cm) {\makebox[3ex][r]{$(k(hg))f$}};
\node (P3) at (-40:1.5cm) {\makebox[3ex][l]{$k((hg)f)$}};
\node (P4) at (90+4*72:2cm) {$k(h(gf))$};
\path[commutative diagrams/.cd, every arrow, every label]
(P0) edge node {$a_{k,h,gf}$} (P4)
(P1) edge node {$a_{kh,g,f}$} (P0)
(P1) edge node[swap] {$a_{k,h,g} * 1_f$} (P2)
(P2) edge node {$a_{k,hg,f}$} (P3)
(P3) edge node[swap] {$1_k * a_{h,g,f}$} (P4);
\end{tikzpicture}
\end{equation}
in $\B(V,Z)$ is commutative.
\end{description}
This finishes the definition of a bicategory.
\end{definition}

We make use of two additional compatibilities between unitors.  The
statements and their proofs are bicategorical analogues of
corresponding results for monoidal 1-categories; cf., \cite[Theorems 6 and 7]{kelly2}.
\begin{proposition}\label{bicat-left-right-unity}
Suppose $f\in\B(X,Y)$ and $g\in\B(Y,Z)$ are $1$-cells.  Then the diagrams
\[\begin{tikzcd}[column sep=small]
(1_Zg)f \arrow{rr}{a} \arrow{rd}[swap]{\ell_g * 1_f} && 1_Z(gf) \arrow{ld}{\ell_{gf}}\\ 
& gf & \end{tikzcd}\qquad
\begin{tikzcd}[column sep=small]
(gf)1_X \arrow{rr}{a} \arrow{rd}[swap]{r_{gf}} && g(f1_X) \arrow{ld}{1_g*r_f}\\ 
& gf & \end{tikzcd}\]
in $\B(X,Z)$ are commutative. 
\end{proposition}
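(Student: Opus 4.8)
The plan is to prove the right-hand (second) diagram directly and to obtain the left-hand diagram by the mirror-image argument; both follow the pattern of Kelly's Theorems~6 and~7, using only the Unity Axiom~\eqref{bicat-unity} and the Pentagon Axiom~\eqref{bicat-pentagon} as structural inputs, supplemented by naturality of the associator and the unitors. Before the computation I would isolate a cancellation principle that lets me descend from a pentagon carrying \emph{two} unit $1$-cells to the desired triangle carrying only one.

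The cancellation principle is this: for parallel $1$-cells $u,v : X \to Z$, the assignment $\phi \mapsto 1_{1_Z} * \phi$ on $2$-cells $u \Rightarrow v$ is injective, and likewise $\phi \mapsto \phi * 1_{1_X}$ is injective. Indeed, naturality of $\ell$ gives $\ell_v \circ (1_{1_Z} * \phi) = \phi \circ \ell_u$, and since $\ell_u$ is invertible, $\phi$ is determined by $1_{1_Z} * \phi$; the statement for $r$ and right whiskering is dual. Thus, to prove $r_{gf} = (1_g * r_f) \circ a_{g,f,1_X}$, it suffices to prove the equation obtained from it by whiskering both sides on the right by $1_{1_X}$, and the presence of this extra unit is precisely what matches a Pentagon instance.

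For the computation I would write down the Pentagon~\eqref{bicat-pentagon} for the four composable $1$-cells $g, f, 1_X, 1_X$, so that both $1_g * a_{f,1_X,1_X}$ and the whiskered $2$-cell $a_{g,f,1_X} * 1_{1_X}$ occur among its five edges. I then rewrite the unit-heavy edges using the Unity Axiom~\eqref{bicat-unity}, applied once to $f$ and once to $gf$ (each with the inserted unit $1_X$), which trades the associators $a_{f,1_X,1_X}$ and $a_{gf,1_X,1_X}$ for composites of $r$ and $\ell$. Naturality of the associator in its three slots, with respect to the relevant unitor $2$-cells, is then used to slide these rewritten $2$-cells past the remaining associators until every term carries a common right factor $1_{1_X}$; the cancellation principle removes that factor and leaves exactly $r_{gf} = (1_g * r_f) \circ a_{g,f,1_X}$. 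The left-hand diagram is the reflection of this argument: one uses the Pentagon for $1_Z, 1_Z, g, f$, rewrites the unit-heavy edges by the Unity Axiom applied to $g$ and to $gf$, and cancels the left whiskering by $1_{1_Z}$ using naturality and invertibility of $\ell$.

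The main obstacle is bookkeeping rather than conceptual. Horizontal composition of $2$-cells is associative only up to the associator, so each time I reassociate a whiskered $2$-cell, comparing for instance $(1_{1_Z} * \ell_g) * 1_f$ with $1_{1_Z} * (\ell_g * 1_f)$, a further associator $2$-cell is introduced that must be absorbed by an additional naturality square; keeping all sources and targets in agreement throughout this process, so that the concluding cancellation is applied to genuinely parallel $2$-cells, is the delicate part. Organizing the entire argument as a single pasting diagram, whose unique composite is guaranteed by the Bicategorical Pasting Theorem, is the cleanest way to control this bookkeeping.
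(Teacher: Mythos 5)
Your proposal is correct and follows essentially the same route as the paper, which gives no independent argument but defers to Kelly's Theorems~6 and~7 \cite{kelly2}: the classical derivation of the outer unit triangles from the middle Unity Axiom~\eqref{bicat-unity} and the Pentagon Axiom~\eqref{bicat-pentagon}, via the faithfulness of whiskering with an identity $1$-cell (your cancellation principle) and naturality of $a$, $\ell$, $r$. Your bicategorical bookkeeping---tracking the extra associators that arise when reassociating whiskered $2$-cells---is exactly the content of the phrase ``bicategorical analogues'' in the paper's citation, so nothing is missing.
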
 
\begin{proposition}\label{bicat-l-equals-r}
For each object $X$ in $\B$, the equality
\[\begin{tikzcd}\ell_{1_X} = r_{1_X} : 1_X 1_X \rar{\cong} & 1_X\end{tikzcd}\]
holds in $\B(X,X)$.
\end{proposition}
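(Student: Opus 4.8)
The plan is to mimic the classical argument that $\lambda_I = \rho_I$ in a monoidal $1$-category (cf.\ the Kelly results cited before \cref{bicat-left-right-unity}), using the middle unity axiom together with \cref{bicat-left-right-unity} and the naturality of the unitors. Everything takes place in the hom-category $\B(X,X)$, and I abbreviate the associator $a_{1_X,1_X,1_X} \cn (1_X 1_X)1_X \to 1_X(1_X 1_X)$ simply by $a$.

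The conceptual core is that horizontal composition with the identity $2$-cell $1_{1_X}$ is \emph{injective} on parallel $2$-cells. Indeed, naturality of $r$ for a $2$-cell $\phi \cn f \to f'$ reads $\phi \circ r_f = r_{f'} \circ (\phi * 1_{1_X})$, and since $r_f$ is invertible this recovers $\phi = r_{f'} \circ (\phi * 1_{1_X}) \circ r_f^{-1}$ from $\phi * 1_{1_X}$. Hence it suffices to prove the single equation $\ell_{1_X} * 1_{1_X} = r_{1_X} * 1_{1_X}$ of $2$-cells $(1_X 1_X)1_X \to 1_X 1_X$, and then apply this injectivity with $\phi = \ell_{1_X}$ and $\phi = r_{1_X}$, both viewed as $2$-cells $1_X 1_X \to 1_X$.

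To establish that equation I would rewrite both whiskerings as composites through $a$. The first diagram of \cref{bicat-left-right-unity}, specialized to $g = f = 1_X$, gives $\ell_{1_X} * 1_{1_X} = \ell_{1_X 1_X} \circ a$, while the middle unity axiom \eqref{bicat-unity}, specialized to $g = 1_X$, $1_W = 1_X$, and $f = 1_X$, gives $r_{1_X} * 1_{1_X} = (1_{1_X} * \ell_{1_X}) \circ a$. Crucially both specializations produce the same associator $a$, so the desired equation reduces to the identity $\ell_{1_X 1_X} = 1_{1_X} * \ell_{1_X}$. This last identity is in turn exactly the naturality of $\ell$ applied to the $2$-cell $\ell_{1_X} \cn 1_X 1_X \to 1_X$: that square reads $\ell_{1_X} \circ \ell_{1_X 1_X} = \ell_{1_X} \circ (1_{1_X} * \ell_{1_X})$, and cancelling the isomorphism $\ell_{1_X}$ yields $\ell_{1_X 1_X} = 1_{1_X} * \ell_{1_X}$.

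Assembling the steps, $\ell_{1_X} * 1_{1_X} = \ell_{1_X 1_X} \circ a = (1_{1_X} * \ell_{1_X}) \circ a = r_{1_X} * 1_{1_X}$, and injectivity of $(-) * 1_{1_X}$ then gives $\ell_{1_X} = r_{1_X}$. I expect the main obstacle to be bookkeeping rather than anything conceptual: one must choose the correct one of the two diagrams in \cref{bicat-left-right-unity}, check that its associator genuinely matches the one produced by the middle unity axiom, and keep the source and target of every whiskered $2$-cell straight so that all composites are actually defined. The key idea that makes the argument short is recognizing that whiskering by $1_{1_X}$ is injective, so that the target equality of $2$-cells into $1_X$ can be deduced from an equality of $2$-cells into $1_X 1_X$.
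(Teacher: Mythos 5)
Your proof is correct and is essentially the paper's own approach: the paper omits an explicit argument, deferring to Kelly's monoidal-category results \cite{kelly2}, and your proof is precisely the bicategorical translation of that classical argument (injectivity of right-whiskering by $1_{1_X}$ via naturality of $r$, the unity triangle of \cref{bicat-left-right-unity}, the middle unity axiom \eqref{bicat-unity}, and naturality of $\ell$ to identify $\ell_{1_X 1_X}$ with $1_{1_X} * \ell_{1_X}$). All specializations and cancellations you perform are valid, so nothing further is needed.
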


\subsection{Lax functors, transformations, and modifications}

\begin{definition}\label{def:lax-functors}
Suppose $(\B,1,c,a,\ell,r)$ and $(\B',1',c',a',\ell',r')$ are bicategories.  A \emph{lax functor} \[(F,F^2,F^0) : \B\to\B'\] from $\B$ to $\B'$ is a triple consisting of the following data.
\begin{itemize}
\item $F : \B_0 \to \B'_0$ is a function on objects.
\item For each pair of objects $X,Y$ in $\B$, it is equipped with a functor \[F : \B(X,Y) \to \B'(FX,FY).\]
\item For all objects $X,Y,Z$ in $\B$, it is equipped with natural transformations
\[\begin{tikzpicture}[commutative diagrams/every diagram, xscale=3.7, yscale=1.5]
\tikzset{arrow/.style={commutative diagrams/.cd,every arrow}}
\node (A) at (0,1) {$\B(Y,Z)\times\B(X,Y)$}; 
\node (B) at (1,1) {$\B(X,Z)$}; 
\node (C) at (0,0) {$\B'(FY,FZ)\times\B'(FX,FY)$}; 
\node (D) at (1,0) {$\B'(FX,FZ)$}; 
\node[font=\Large] at (.6,.5) {\rotatebox{45}{$\Rightarrow$}}; 
\node at (.5,.6) {$F^2$};
\draw [arrow] (A) to node{$c$} (B); 
\draw [arrow] (B) to node{$F$} (D);
\draw [arrow] (A) to node[swap]{$F\times F$} (C); 
\draw [arrow] (C) to node[swap]{$c'$} (D);
\end{tikzpicture}\qquad
\begin{tikzpicture}[commutative diagrams/every diagram, xscale=2, yscale=1.5]
\tikzset{arrow/.style={commutative diagrams/.cd,every arrow}}
\node (A) at (0,1) {$\boldone$}; 
\node (B) at (1,1) {$\B(X,X)$}; \node (C) at (0,0) {}; 
\node (D) at (1,0) {$\B'(FX,FX)$}; 
\node[font=\Large] at (.6,.5) {\rotatebox{45}{$\Rightarrow$}}; 
\node at (.4,.6) {$F^0$};
\draw [arrow] (A) to node{$1_X$} (B);
\draw [arrow] (B) to node{$F$} (D);
\draw [arrow, out=-90, in=170] (A) to node[near start, swap]{$1'_{FX}$} (D);
\end{tikzpicture}\]
with component $2$-cells 
\[\begin{tikzcd}Fg \circ Ff \ar{r}{F^2_{g,f}} & F(gf)\end{tikzcd}\andspace
\begin{tikzcd} 1'_{FX} \ar{r}{F^0_X} & F1_X.\end{tikzcd}\]
\end{itemize}
The above data is required to make the following three diagrams commutative for all $1$-cells $f \in \B(W,X)$, $g \in \B(X,Y)$, and $h \in \B(Y,Z)$.
\begin{description}
\item[Lax associativity]
\begin{equation}\label{f2-bicat}
\begin{tikzpicture}[x=19mm,y=19mm,baseline={(a.base)}]
  \draw[0cell] 
  (0,0) node (a) {(Fh \circ Fg) \circ Ff}
  (60:1) node (b) {Fh \circ (Fg \circ Ff)}
  (-60:1) node (c) {F(hg) \circ Ff}
  (3.25,0) node (d) {F(h(gf))}
  (d) ++(120:1) node (e) {Fh \circ F(gf)}
  (d) ++(240:1) node (f) {F((hg)f)}
  ;
  \path[1cell] 
  (a) edge node {a'} (b)
  (b) edge node {1_{Fh}*F^2_{g,f}} (e)
  (e) edge node[pos=.7] {F^2_{hg,f}} (d)
  (a) edge[swap] node[pos=.2] {F^2_{h,g} *1_{Ff}} (c)
  (c) edge[swap] node {F^2_{h,gf}} (f)
  (f) edge[swap] node {Fa} (d)
  ;
\end{tikzpicture}
\end{equation}
in $B'(FW,FZ)$.
\item[Lax left and right unity]
\begin{equation}\label{f0-bicat}
\begin{tikzpicture}[x=17mm,y=16mm,baseline=(b.base)]
  \draw[0cell] 
  (0,0) node (a) {1'_{FX} \circ Ff}
  (.15,1) node (b) {F1_X \circ Ff}
  (2,0) node (d) {Ff}
  (d) ++(-.15,1) node (c) {F(1_X\circ f)}
  ;
  \path[1cell] 
  (a) edge node[pos=.3] (X) {F^0_X*1_{Ff}} (b)
  (b) edge node {F^2_{1_X,f}} (c)
  (c) edge node {F\ell} (d)
  (a) edge node {\ell'} (d)
  ;
\end{tikzpicture}
\qquad
\begin{tikzpicture}[x=18mm,y=16mm,baseline=(b.base)]
  \draw[0cell] 
  (0,0) node (a) {Ff \circ 1'_{FW}}
  (.15,1) node (b) {Ff \circ F1_W }
  (2,0) node (d) {Ff}
  (d) ++(-.15,1) node (c) {F(f\circ 1_W)}
  ;
  \path[1cell] 
  (a) edge node[pos=.3] (X) {1_{Ff}*F^0_W} (b)
  (b) edge node {F^2_{f,1_W}} (c)
  (c) edge node {Fr} (d)
  (a) edge node {r'} (d)
  ;
\end{tikzpicture}
\end{equation}
in $\B'(FW,FX)$.
\end{description}
This finishes the definition of a lax functor.  Moreover:
\begin{itemize}
\item A lax functor is \emph{unitary} (resp., \emph{strictly unitary})
  if each 2-cell $F^0_X$ is an isomorphism (resp., identity).
\item A \emph{pseudofunctor} is a lax functor in which $F^2$ and $F^0$ are natural isomorphisms.
\item A \emph{strict functor} is a lax functor in which $F^2$ and $F^0$ are identity natural transformations.  
\end{itemize}
We let $\Id_\B$ denote the identity strict functor for a bicategory $\B$.
\end{definition}

The next result defines the \emph{constant pseudofunctor} at an object.
\begin{proposition}\label{constant-pseudofunctor}
Suppose $X$ is an object in a bicategory $\B$, and $\A$ is another bicategory.  Then there is a strictly unitary pseudofunctor \[\conof{X} : \A\to\B\] defined as follows.
\begin{itemize}
\item $\conof{X}$ sends each object of $\A$ to $X$.
\item For each pair of objects $Y,Z$ in $\A$, the functor 
\[\conof{X} : \A(Y,Z) \to \B(X,X)\] sends 
\begin{itemize}
\item every $1$-cell in $\A(Y,Z)$ to the identity $1$-cell $1_X$ of $X$;
\item every $2$-cell in $\A(Y,Z)$ to the identity $2$-cell $1_{1_X}$ of the identity $1$-cell.
\end{itemize}
\item For each object $Y$ of $\A$, the lax unity constraint is 
\[(\conof{X})^0_Y = 1_{1_X} : 1_X \to 1_X.\]
\item For each pair of composable $1$-cells $(g,f)$ in $\A$, the lax functoriality constraint is 
\[(\conof{X})^2_{g,f} = \ell_{1_X} : 1_X 1_X \to 1_X.\]
\end{itemize}
\end{proposition}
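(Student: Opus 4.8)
The plan is to take the prescribed data and verify directly that it satisfies the conditions defining a strictly unitary pseudofunctor in \cref{def:lax-functors}: that each hom-assignment is a functor, that $(\conof{X})^2$ and $(\conof{X})^0$ are natural isomorphisms, and that the lax associativity and lax left/right unity diagrams commute. Throughout I write $e = 1_X$ for the identity $1$-cell of the target object $X\in\B$ and $1_e = 1_{1_X}$ for its identity $2$-cell, and I use repeatedly that $\conof{X}$ collapses every $1$-cell of $\A$ to $e$ and every $2$-cell of $\A$ to $1_e$. In the instance of \cref{def:lax-functors} at hand the source bicategory is $\A$ and the target is $\B$, so the primed structure in the coherence diagrams is the structure $(a,\ell,r)$ of $\B$.

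First come the routine checks. Each functor $\conof{X}\cn \A(Y,Z)\to\B(X,X)$ is the constant functor at the object $e$; since it sends every $2$-cell to $1_e$ it preserves identities and vertical composition, so it is a functor. The unit constraint $(\conof{X})^0_Y = 1_e$ is an identity $2$-cell, which makes $\conof{X}$ strictly unitary and is in particular invertible, and its naturality is vacuous because its source is the terminal category $\boldone$. For $(\conof{X})^2$ I note that $\ell_{1_X}$ is invertible by the bicategory axioms, and that naturality of $(\conof{X})^2$ amounts to a square whose horizontal legs are both $\ell_e$ and whose vertical legs are $1_e * 1_e = 1_{ee}$ and $1_e$ (because $\conof{X}$ sends all $2$-cells to $1_e$); this square commutes on the nose. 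Hence $(\conof{X})^2$ and $(\conof{X})^0$ are natural isomorphisms.

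Next I would substitute the data into the three coherence diagrams and simplify. After substitution every object appearing is either $ee = 1_X1_X$ or $e = 1_X$, every instance of $(\conof{X})^0$ is $1_e$, every instance of $(\conof{X})^2$ is $\ell_e$, and each value $\conof{X}(\theta)$ of a structure $2$-cell $\theta$ of $\A$ (a unitor or associator) is $1_e$. The lax left unity diagram \eqref{f0-bicat} then collapses so that both legs equal $\ell_e$, and it commutes trivially. The lax right unity diagram collapses to the single equation $\ell_e = r_e$, that is $\ell_{1_X} = r_{1_X}$; this is exactly \cref{bicat-l-equals-r}, so it holds.

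The only substantive point is the lax associativity diagram \eqref{f2-bicat}, which after substitution becomes the assertion that
\[\ell_e \circ (1_e * \ell_e) \circ a_{e,e,e} = \ell_e \circ (\ell_e * 1_e)\]
in $\B(X,X)$, where $\circ$ denotes vertical composition and $a_{e,e,e} = a_{1_X,1_X,1_X}$. I would prove this by combining two facts about the unit structure of $\B$. Applying naturality of the left unitor $\ell$ to the $2$-cell $\ell_e \cn ee \to e$ rewrites the left-hand side as $\ell_e \circ \ell_{ee} \circ a_{e,e,e}$; then the first diagram of \cref{bicat-left-right-unity}, taken with $g = f = 1_X$, gives $\ell_{ee} \circ a_{e,e,e} = \ell_e * 1_e$, which produces the right-hand side. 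The hard part is exactly this reduction: recognizing that the collapsed associativity constraint for the constant pseudofunctor is precisely the content of the unit coherence of \cref{bicat-left-right-unity} together with naturality of $\ell$. Everything else is bookkeeping, so once this identity is in hand the proposition follows.
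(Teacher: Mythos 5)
Your proof is correct, and it follows exactly the route the paper intends: the paper leaves this verification as an exercise "in the unity properties, notably \cref{bicat-left-right-unity,bicat-l-equals-r}," and your argument fills in that exercise precisely—reducing the lax right unity diagram to $\ell_{1_X}=r_{1_X}$ (\cref{bicat-l-equals-r}) and the lax associativity hexagon to the first diagram of \cref{bicat-left-right-unity} plus naturality of $\ell$.
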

The proof that these data define a strictly unitary pseudofunctor is
an exercise in the unity properties, notably
\cref{bicat-left-right-unity,bicat-l-equals-r}, and we leave it to the reader.


\begin{definition}\label{definition:lax-transformation}
Let $(F,F^2,F^0)$ and $(G,G^2,G^0)$ be lax functors $\B \to \B'$. A \emph{lax transformation} $\alpha\cn F \to G$ consists of the following data.
\begin{description}
\item[Components] It is equipped with a component $1$-cell $\alpha_X\in \B'(FX,GX)$ for each object $X$ in $\B$.  
\item[Lax naturality constraints] For each pair of objects $X,Y$ in $\B$, it is equipped with a  natural transformation
\[\alpha : \al_X^* G \to (\al_Y)_* F : \B(X,Y) \to \B'(FX,GY),\] 
with a component 2-cell $\alpha_f \cn (Gf) \al_X \to \al_Y (Ff)$, as in the following diagram,  for each 1-cell $f\in\B(X,Y)$.
\[\begin{tikzpicture}[xscale=2.5, yscale=-1.8]
\tikzset{arrow/.style={commutative diagrams/.cd,every arrow},swap}
\node (F1) at (0,0) {$FX$}; \node (F2) at (1,0) {$FY$}; 
\node (G1) at (0,1) {$GX$}; \node (G2) at (1,1) {$GY$};
\draw[arrow,swap] (F1) to node{$Ff$} (F2);
\draw[arrow] (F1) to node{$\alpha_X$} (G1);
\draw[arrow] (F2) to node[swap]{$\alpha_Y$} (G2);
\draw[arrow] (G1) to node{$Gf$} (G2);
\node[font=\Large] at (.4,.6) {\rotatebox{45}{$\Rightarrow$}}; 
\node at (.6,.6) {$\alpha_{f}$};
\end{tikzpicture}\]
\end{description}
The above data is required to satisfy the following two pasting diagram equalities for all objects $X,Y,Z$ and 1-cells $f \in \B(X,Y)$ and $g \in \B(Y,Z)$.
\begin{description}
\item[Lax unity]
\begin{equation}\label{unity-transformation-pasting}
\begin{tikzpicture}[xscale=2.5, yscale=-2, baseline=(al.base)]
\tikzset{arrow/.style={commutative diagrams/.cd,every arrow,swap}}
\node (F1) at (0,0) {$FX$}; \node (F2) at (1,0) {$FX$}; 
\node (G1) at (0,1) {$GX$}; \node (G2) at (1,1) {$GX$};
\draw[arrow, bend right] (F1) to node[swap]{$F1_X$} (F2);
\draw[arrow] (F1) to node (al) {$\alpha_X$} (G1);
\draw[arrow] (F2) to node[swap]{$\alpha_X$} (G2);
\draw[arrow,bend left] (G1) to node{$1_{GX}$} (G2);
\draw[arrow,bend right] (G1) to node[swap]{$G1_{X}$} (G2);
\node[font=\Large] at (.4,.2) {\rotatebox{45}{$\Rightarrow$}}; 
\node at (.6,.2) {$\alpha_{1_X}$};
\node[font=\Large] at (.45,1) {\rotatebox{90}{$\Rightarrow$}}; 
\node at (.6,1) {$G^0$};
\node at (1.7,.5) {\LARGE{$=$}};
\node (F3) at (2.4,0) {$FX$}; \node (F4) at (3.4,0) {$FX$}; 
\node (G3) at (2.4,1) {$GX$}; \node (G4) at (3.4,1) {$GX$};
\draw[arrow, bend right] (F3) to node[swap]{$F1_X$} (F4);
\draw[arrow] (F3) to node{$\alpha_X$} (G3);
\draw[arrow] (F4) to node[swap]{$\alpha_X$} (G4);
\draw[arrow,bend left] (G3) to node{$1_{GX}$} (G4);
\draw[arrow, bend left] (F3) to node[inner sep=0pt]{$\alpha_X$} (G4);
\draw[arrow, bend left] (F3) to node{$1_{FX}$} (F4);
\node[font=\Large] at (2.7,1) {\rotatebox{45}{$\Rightarrow$}}; 
\node at (2.8,1) {$\ell$};
\node[font=\Large] at (2.9,.6) {\rotatebox{45}{$\Rightarrow$}}; 
\node at (3.1,.6) {$r^{-1}$};
\node[font=\Large] at (2.85,0) {\rotatebox{90}{$\Rightarrow$}}; 
\node at (3,0) {$F^0$};
\end{tikzpicture}
\end{equation}
\item[Lax naturality]
\begin{equation}\label{2-cell-transformation-pasting}
\begin{tikzpicture}[xscale=1.7, yscale=-2, baseline=(al.base)]
\tikzset{arrow/.style={commutative diagrams/.cd,every arrow},swap}
\node (F1) at (0,0) {$FX$}; \node (F2) at ($(F1) + (2,0)$) {$FZ$}; 
\node (G1) at ($(F1)+(0,1)$) {$GX$}; \node (G2) at ($(F1)+(1,1.3)$) {$GY$};
\node (G3) at ($(F1)+(2,1)$) {$GZ$};
\node[font=\Large] at ($(F1)+(.8,1)$) {\rotatebox{90}{$\Rightarrow$}}; 
\node at ($(F1)+(1.1,1)$) {$G^2$};
\node[font=\Large] at ($(F1)+(.8,.1)$) {\rotatebox{45}{$\Rightarrow$}}; 
\node at ($(F1)+(1.1,.1)$) {$\alpha_{gf}$};
\draw[arrow, bend right=15] (F1) to node[swap]{$F(gf)$} (F2);
\draw[arrow] (F2) to node[swap]{$\alpha_Z$} (G3);
\draw[arrow] (F1) to node (al) {$\alpha_X$} (G1);
\draw[arrow, bend left=10] (G1) to node{$Gf$} (G2);
\draw[arrow, bend left=10] (G2) to node{$Gg$} (G3);
\draw[arrow, bend right=15] (G1) to node[swap]{$G(gf)$} (G3);
\node at ($(F1)+(2.75,.5)$) {\LARGE{$=$}};
\node (F3) at ($(F1)+(3.5,0)$) {$FX$}; 
\node (F4) at ($(F3) + (1,.3)$) {$FY$}; \node (F5) at ($(F3) + (2,0)$) {$FZ$}; 
\node (G4) at ($(F3)+(0,1)$) {$GX$}; \node (G5) at ($(F3)+(1,1.3)$) {$GY$};
\node (G6) at ($(F3)+(2,1)$) {$GZ$};
\node[font=\Large] at ($(F3)+(.3,.8)$) {\rotatebox{45}{$\Rightarrow$}}; 
\node at ($(F3)+(.5,.9)$) {$\alpha_f$};
\node[font=\Large] at ($(F3)+(1.3,.8)$) {\rotatebox{45}{$\Rightarrow$}}; 
\node at ($(F3)+(1.5,.9)$) {$\alpha_g$};
\node[font=\Large] at ($(F3)+(.9,.05)$) {\rotatebox{90}{$\Rightarrow$}}; 
\node at ($(F3)+(1.1,.05)$) {$F^2$};
\draw[arrow, bend right=15] (F3) to node[swap]{$F(gf)$} (F5);
\draw[arrow] (F5) to node[swap]{$\alpha_Z$} (G6);
\draw[arrow] (F3) to node{$\alpha_X$} (G4);
\draw[arrow, bend left=10] (G4) to node{$Gf$} (G5);
\draw[arrow, bend left=10] (G5) to node{$Gg$} (G6);
\draw[arrow, bend left=10] (F3) to node[inner sep=1]{$Ff$} (F4);
\draw[arrow, bend left=10] (F4) to node[inner sep=1]{$Fg$} (F5);
\draw[arrow] (F4) to node[near start, inner sep=1]{$\alpha_Y$} (G5);
\end{tikzpicture}
\end{equation}
\end{description}
This finishes the definition of a lax transformation.  Moreover, a
\emph{strong transformation} is a lax transformation in which every
component 2-cell $\alpha_f$ is an isomorphism.  We let $1_F$ denote
the identity strong transformation on a lax functor $F$.
\end{definition}


\begin{definition}\label{def:modification}
Suppose $\alpha,\beta : F \to G$ are lax transformations for lax functors $F,G : \B\to\B'$.  A \emph{modification} $\Gamma : \alpha \to \beta$ consists of a component $2$-cell \[\Gamma_X : \alpha_X \to \beta_X\] in $\B'(FX,GX)$ for each object $X$ in $\B$, that satisfies the following \emph{modification axiom}
\begin{equation}\label{modification-axiom}
\begin{tikzpicture}[xscale=2, yscale=-2, baseline=(eq.base)]
\tikzset{arrow/.style={commutative diagrams/.cd,every arrow},swap}
\node (F1) at (0,0) {$FX$}; \node (F2) at ($(F1) + (1,0)$) {$FY$}; 
\node (G1) at ($(F1)+(0,1)$) {$GX$}; \node (G2) at ($(F1)+(1,1)$) {$GY$};
\node[font=\Large] at ($(F1)+(.15,.5)$) {\rotatebox{45}{$\Rightarrow$}}; 
\node[font=\small] at ($(F1)+(.25,.65)$) {$\alpha_{f}$};
\node[font=\Large] at ($(F1)+(1,.4)$) {$\Rightarrow$}; 
\node[font=\small] at ($(F1)+(1,.55)$) {$\Gamma_Y$};
\draw[arrow] (F1) to node[swap]{\small{$Ff$}} (F2);
\draw[arrow, bend right=40] (F2) to node[swap]{\small{$\beta_Y$}} (G2);
\draw[arrow, bend left=40] (F1) to node{\small{$\alpha_X$}} (G1);
\draw[arrow] (G1) to node{\small{$Gf$}} (G2);
\draw[arrow, bend left=40] (F2) to node{\small{$\alpha_Y$}} (G2);
\node (eq) at ($(F1)+(2,.5)$) {\LARGE{$=$}};
\node (F3) at ($(F1)+(3,0)$) {$FX$}; \node (F4) at ($(F3) + (1,0)$) {$FY$}; 
\node (G3) at ($(F3)+(0,1)$) {$GX$}; \node (G4) at ($(F3)+(1,1)$) {$GY$};
\node[font=\Large] at ($(F3)+(.75,.55)$) {\rotatebox{45}{$\Rightarrow$}}; 
\node[font=\small] at ($(F3)+(.9,.65)$) {$\beta_{f}$};
\node[font=\Large] at ($(F3)+(0,.4)$) {$\Rightarrow$}; 
\node[font=\small] at ($(F3)+(0,.55)$) {$\Gamma_X$};
\draw[arrow] (F3) to node[swap]{\small{$Ff$}} (F4);
\draw[arrow, bend right=40] (F4) to node[swap]{\small{$\beta_Y$}} (G4);
\draw[arrow, bend left=40] (F3) to node{\small{$\alpha_X$}} (G3);
\draw[arrow] (G3) to node{\small{$Gf$}} (G4);
\draw[arrow, bend right=40] (F3) to node[swap]{\small{$\beta_X$}} (G3);
\end{tikzpicture}
\end{equation}
for each $1$-cell $f \in \B(X,Y)$.  A modification is
\emph{invertible} if each component 2-cell $\Gamma_X$ is an isomorphism.
\end{definition}

\subsection{Adjoint and invertible 1-cells}

In this section we recall basic notions of internal adjunction,
invertibility, and mates.  We will need these for the constructions in
following sections.

\begin{definition}\label{definition:internal-adjunction}
  An \demph{internal adjunction} in a bicategory
  $\sB$ is a quadruple $(f,g,\eta,\epz)$ consisting of
  \begin{itemize}
  \item 1-cells $f\cn X \to Y$ and $g\cn Y \to X$;
  \item 2-cells $\eta\cn 1_X \to gf$ and $\epz\cn fg \to 1_Y$.
  \end{itemize}
  These data are subject to the following two axioms, in the form of
  commutative triangles.
  \begin{diagram}\label{diagram:triangles}
    \begin{tikzpicture}[x=20mm,y=20mm,rotate=-45,baseline={(fgf2.base)}]
      \draw[0cell] 
      (0,0) node (f1) {f\, 1_X}
      (1,0) node (fgf1) {f(gf)}
      (2,0) node (fgf2) {(fg)f}
      (2,-1) node (1f) {1_Y\, f}
      (2,-2) node (f) {f}
      ;
      \draw[1cell] 
      (f1) edge[swap] node {r_f} (f)
      (f1) edge node {1_f * \eta} (fgf1)
      (fgf1) edge node {a_{f,g,f}^\inv} (fgf2)
      (fgf2) edge node {\epz * 1_f} (1f)
      (1f) edge node {\ell_f} (f)
      ;
      \draw[2cell] 
      
      ;
    \end{tikzpicture}
    \qquad \qquad
    \begin{tikzpicture}[x=20mm,y=20mm,rotate=-45,baseline={(gfg2.base)}]
      \draw[0cell] 
      (0,0) node (1g) {1_X\, g}
      (1,0) node (gfg1) {(gf)g}
      (2,0) node (gfg2) {g(fg)}
      (2,-1) node (g1) {g\, 1_Y}
      (2,-2) node (g) {g}
      ;
      \draw[1cell] 
      (1g) edge[swap] node {\ell_g} (g)
      (1g) edge node {\eta * 1_g} (gfg1)
      (gfg1) edge node {a_{g,f,g}} (gfg2)
      (gfg2) edge node {1_g * \epz} (g1)
      (g1) edge node {r_g} (g)
      ;
      \draw[2cell] 
      
      ;
    \end{tikzpicture}
  \end{diagram}
\end{definition}

If $(f,g,\eta,\epz)$ is an adjunction with $f\cn X \to Y$, then the
represented adjunctions given by pre- and post-composition induce
isomorphisms of 2-cells; corresponding 2-cells under these
isomorphisms are known as mates, and defined as follows.
\begin{definition}\label{definition:mates}
  Suppose $(f_0, g_0, \eta_0, \epz_0)$ and $(f_1, g_1, \eta_1,
  \epz_1)$ is a pair of adjunctions in $\B$, with $f_0\cn X_0 \to Y_0$
  and $f_1\cn X_1 \to Y_1$.  Suppose moreover that $a\cn X_0 \to X_1$
  and $b\cn Y_0 \to Y_1$ are 1-cells in $\B$.
  The \emph{mate} of a 2-cell $\omega\cn
  f_1a \to bf_0$ is given by the pasting diagram at left below.
  Likewise, the \emph{mate} of a 2-cell $\nu\cn ag_0 \to g_1b$ is given by
  the pasting diagram at right below.
  \[
  \begin{tikzpicture}[x=20mm,y=20mm,scale=.85]
    \draw[0cell] 
    (0,0) node (x0) {X_0}
    (1,-.5) node (x1) {X_1}
    (0,-1) node (y0) {Y_0}
    (1,-1.5) node (y1) {Y_1}
    (-1,0.5) node (y0') {Y_0}
    (2,-2) node (x1') {X_1}
    (y0') ++(2.6,-.2) node (T) {X_0}
    (x1') ++(-2.6,.2) node (B) {Y_0}
    ;
    \draw[1cell] 
    (x0) edge node (a) {a} (x1)
    (y0) edge[swap] node (b) {b} (y1)
    (x0) edge[swap] node {f_0} (y0)
    (x1) edge node {f_1} (y1)
    (y0') edge node {g_0} (x0)
    (y0') edge[swap,bend right=20] node[pos=.6] (1y) {1} (y0)
    (y1) edge[swap] node {g_1} (x1')
    (x1) edge[bend left=20] node[pos=.4] (1x) {1} (x1')
    (y0') edge[bend left=20] node {g_0} (T)
    (T) edge[bend left=15] node {a} (x1')
    (y0') edge[bend right=15, swap] node {b} (B)
    (B) edge[bend right=20, swap] node {g_1} (x1')
    ;
    \draw[2cell] 
    node[between=y0 and x1 at .5, rotate=225, font=\Large] (A) {\Rightarrow}
    (A) node[above left] {\omega}
    node[between=1y and x0 at .4, rotate=225, font=\Large] (E) {\Rightarrow}
    (E) node[above left] {\epz_0}
    node[between=y1 and 1x at .6, rotate=225, font=\Large] (H) {\Rightarrow}
    (H) node[below right] {\eta_1}
    node[between=T and a at .5, rotate=225, font=\large] (U) {\Rightarrow}
    (U) node[above left] {\ell^\inv}
    node[between=B and b at .5, rotate=225, font=\large] (V) {\Rightarrow}
    (V) node[below right] {r}
    ;
  \end{tikzpicture}
  \quad\qquad
  \begin{tikzpicture}[x=20mm,y=20mm, scale=.85]
    \draw[0cell] 
    (0,0) node (x0) {X_0}
    (1,0.5) node (x1) {X_1}
    (0,-1) node (y0) {Y_0}
    (1,-0.5) node (y1) {Y_1}
    (2,1) node (y1') {Y_1}
    (-1,-1.5) node (x0') {X_0}
    (x0') ++(2.6,.2) node (B) {Y_1}
    (y1') ++(-2.6,-.2) node (T) {X_1}
    ;
    \draw[1cell] 
    (x0) edge node (a) {a} (x1)
    (y0) edge[swap] node (b) {b} (y1)
    (y0) edge node {g_0} (x0)
    (y1) edge[swap] node {g_1} (x1)
    (x1) edge node {f_1} (y1')
    (y1) edge[swap, bend right=20] node[pos=.4] (1y) {1} (y1')
    (x0') edge[swap] node {f_0} (y0)
    (x0') edge[bend left=20] node[pos=.6] (1x) {1} (x0)
    (x0') edge[bend left=15] node {a} (T)
    (T) edge[bend left=20] node {f_1} (y1')
    (x0') edge[bend right=20, swap] node {f_0} (B)
    (B) edge[bend right=15, swap] node {b} (y1')
    ;
    \draw[2cell] 
    node[between=y0 and x1 at .5, rotate=-45, font=\Large] (A) {\Rightarrow}
    (A) node[above right] {\nu}
    node[between=x1 and 1y at .6, rotate=-45, font=\Large] (E) {\Rightarrow}
    (E) node[above right] {\epz_1}
    node[between=y0 and 1x at .6, rotate=-45, font=\Large] (H) {\Rightarrow}
    (H) node[below left] {\eta_0}
    node[between=T and a at .5, rotate=-45, font=\large] (U) {\Rightarrow}
    (U) node[above right] {r^\inv}
    node[between=B and b at .5, rotate=-45, font=\large] (V) {\Rightarrow}
    (V) node[below left] {\ell}
    ;
  \end{tikzpicture}
  \]
\end{definition}
The triangle identities imply that these define inverse bijections, and
thus we have the following result.
\begin{lemma}\label{lemma:mate-pairs}
  If $(f_0, g_0, \eta_0, \epz_0)$ and $(f_1, g_1, \eta_1, \epz_1)$ is a pair of adjunctions
  in $\B$, with $f_0\cn X_0 \to Y_0$ and $f_1\cn X_1 \to
  Y_1$, then taking mates establishes a bijection of 2-cells
  \[
  \B(X_0,Y_1)(f_1 a, b f_0) \iso \B(Y_0,X_1)(a g_0, g_1 b).
  \]
  for any 1-cells $a\cn X_0 \to X_1$ and $b\cn Y_0 \to Y_1$.
\end{lemma}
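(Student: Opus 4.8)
The plan is to show that the two assignments described in \cref{definition:mates} are mutually inverse. Write $\Phi$ for the function sending $\omega\cn f_1 a \to b f_0$ to its mate $\Phi(\omega)\cn a g_0 \to g_1 b$, given by the left pasting diagram, and $\Psi$ for the function sending $\nu\cn a g_0 \to g_1 b$ to its mate $\Psi(\nu)\cn f_1 a \to b f_0$, given by the right pasting diagram. It suffices to prove $\Psi\Phi = \Id$ and $\Phi\Psi = \Id$. Since the two pasting diagrams are formally symmetric—each is obtained from the other by exchanging the two adjunctions and reversing the left-right orientation—I would treat $\Psi\Phi = \Id$ in detail; the identity $\Phi\Psi = \Id$ is entirely analogous, with the roles of the two adjunctions and of the two triangle identities interchanged.

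To compute $\Psi\Phi(\omega)$ I would first record each mate as an explicit composite. Up to the associators and unitors appearing in \cref{definition:mates}, $\Phi(\omega)$ is the pasting
\[a g_0 \to (g_1 f_1)\,a\,g_0 \xrightarrow{1*\omega*1} g_1\,(b f_0)\,g_0 \to g_1 b,\]
inserting $\eta_1$ on the left and applying $\epz_0$ on the right, while $\Psi(\nu)$ inserts $\eta_0$ and applies $\epz_1$:
\[f_1 a \to f_1\,a\,(g_0 f_0) \xrightarrow{1*\nu*1} f_1\,(g_1 b)\,f_0 \to b f_0.\]
Substituting $\nu = \Phi(\omega)$ produces a single large pasting diagram for $\Psi\Phi(\omega)$ in which the $2$-cell $\omega$ occurs exactly once and the only remaining cell data are $\eta_0,\epz_0,\eta_1,\epz_1$ together with structural isomorphisms. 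Using the middle-four interchange law and the naturality of the associator and unitors, I would slide $\omega$ free of the adjunction data and regroup so that $\eta_1$ and $\epz_1$ meet in a subregion of the form $f_1 \to f_1(g_1 f_1) \to f_1$ and, symmetrically, $\eta_0$ and $\epz_0$ meet in a subregion $f_0 \to f_0(g_0 f_0) \to f_0$. Each of these is precisely the left-hand side of the first triangle identity of \cref{diagram:triangles}, for the respective adjunction, and so each collapses to the corresponding unitor, leaving $\omega$.

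The main obstacle is the coherence bookkeeping rather than the conceptual structure: after the two triangle identities are applied, one must verify that the surviving composite of associators and unitors is the identity $2$-cell on the relevant $1$-cell. I would handle this by repeatedly invoking the pentagon axiom \eqref{bicat-pentagon}, the middle unity axiom \eqref{bicat-unity}, and the compatibilities of \cref{bicat-left-right-unity,bicat-l-equals-r}. Since every cell in question is built solely from structural isomorphisms, any two parallel such composites agree, and the residual coherence cell is forced to be an identity. This reduces $\Psi\Phi(\omega)$ to $\omega$, and the mirror-image computation, which uses the second triangle identity for each adjunction, gives $\Phi\Psi = \Id$, establishing the claimed bijection of \cref{lemma:mate-pairs}.
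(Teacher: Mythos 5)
Your proposal is correct and matches the paper's own justification: the paper simply asserts, immediately before the lemma, that "the triangle identities imply that these define inverse bijections," and your argument is the standard expansion of that assertion — substitute one mate construction into the other, use interchange and naturality to isolate $(\epz_i * 1_{f_i})\circ(1_{f_i} * \eta_i)$ (resp. the $g_i$-side composites for the other direction), apply the triangle identities of \cref{diagram:triangles}, and absorb the residual structural isomorphisms by coherence. The identification of which triangle identity is used in which direction, and the duality reducing the two verifications to one, are both accurate.
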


\begin{definition}\label{definition:internal-equivalence}
  An adjunction $(f,g,\eta,\epz)$ with $f\cn X \to Y$ and
  $g\cn Y \to X$ is called an \emph{internal equivalence} or
  \emph{adjoint equivalence} if $\eta$ and $\epz$ are isomorphisms.

  We say that $f$ and $g$ are members of an adjoint equivalence in
  this case, and we write $X \hty Y$ if such an equivalence exists.
  If $f$ is a member of an adjoint equivalence, we let $f^\bdot$
  denote an adjoint.
\end{definition}

Since mates are formed by pasting with unit/counit and unitors, we
have the following.
\begin{lemma}\label{lemma:mate-iso}
  If $(f,f^\bdot)$ is an adjoint equivalence, then a 2-cell
  $\theta\cn fs \to t$ is an isomorphism if and only if its mate
  $\theta^\dagger\cn s \to gt$ is an isomorphism.
\end{lemma}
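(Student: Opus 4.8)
The plan is to exhibit the mate $\theta^\dagger$ as a pasting composite in which the only 2-cell that is not automatically invertible is $\theta$ itself, and then to argue that pasting composites of isomorphisms are isomorphisms. Concretely, I would instantiate \cref{definition:mates} by taking the second adjunction to be $(f, f^\bdot, \eta, \epz)$ and the first to be the identity adjunction on the common source $W$ of $s$ and $t$, with $a = s$ and $b = t$. Under this choice the 2-cell $\omega\cn f_1 a \to b f_0$ is $\theta$ (up to a right unitor), and the left-hand pasting of \cref{definition:mates} computes its mate $\nu\cn a g_0 \to g_1 b$, which is $\theta^\dagger\cn s \to f^\bdot t$ (again up to a right unitor). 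Reading off the constituents, this pasting is built from $\theta$, the unit $\eta$, the unit and counit of the identity adjunction (which are unitors), and further unitors $\ell^\inv, r$.

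Next I would record the elementary closure property that a pasting composite of invertible 2-cells is invertible. Indeed, by the Bicategorical Pasting Theorem of \cite{JYpasting} the composite of the pasting diagram is an iterated vertical and horizontal composite of its constituent 2-cells; vertical composites of isomorphisms are isomorphisms, and horizontal composites $\beta * \alpha$ of isomorphisms are isomorphisms because each $c_{XYZ}$ is a functor and functors preserve isomorphisms. Since $(f, f^\bdot)$ is an adjoint equivalence, $\eta$ and $\epz$ are isomorphisms, and all unitors and associators are isomorphisms by the definition of a bicategory. Therefore, if $\theta$ is an isomorphism, then every constituent 2-cell of the pasting for $\theta^\dagger$ is an isomorphism, whence $\theta^\dagger$ is an isomorphism.

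For the reverse implication I would appeal to \cref{lemma:mate-pairs}, which identifies taking mates as a bijection whose inverse is the reverse mate, i.e.\ the right-hand pasting of \cref{definition:mates}. Under the same instantiation this reverse pasting recovers $\theta$ from $\theta^\dagger$ using the counit $\epz$ of $(f,f^\bdot)$ together with unitors and associators, all of which are isomorphisms. The same closure property then shows that $\theta^\dagger$ being an isomorphism forces $\theta$ to be an isomorphism, completing the equivalence.

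The main thing to be careful about---rather than a genuine obstacle---is the bookkeeping in the instantiation: one must check that the identity adjunction in the unused slot contributes only unitors, and that the right unitors $r_s, r_t$ used to reconcile $\omega$ with $\theta$ and $\nu$ with $\theta^\dagger$ are themselves isomorphisms (which they are), so that invertibility transfers cleanly in both directions. Once the pasting is written out explicitly, both implications reduce to the single observation that horizontal and vertical composition send isomorphisms to isomorphisms.
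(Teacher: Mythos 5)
Your proposal is correct and takes essentially the same route as the paper, which disposes of this lemma with the single remark that mates are formed by pasting with unit/counit and unitors---all of which are isomorphisms when $(f,f^\bdot)$ is an adjoint equivalence. Your write-up merely makes explicit what the paper leaves implicit: the instantiation of \cref{definition:mates} with an identity adjunction in one slot, the closure of isomorphisms under pasting, and the converse via the inverse bijection of \cref{lemma:mate-pairs}.
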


\begin{definition}\label{definition:invertible-1-cell}
  A 1-cell $f\cn X \to Y$ is said to be \emph{invertible} or \emph{an
    equivalence} if there exists a 1-cell $g\cn Y \to X$ together with
  isomorphisms $gf \iso 1_X$ and $1_Y \iso fg$.
\end{definition}
Clearly the 1-cells in an adjoint equivalence are invertible.  The
converse also holds, by a standard argument modifying one of the two
isomorphisms in \cref{definition:invertible-1-cell}.
\begin{proposition}\label{proposition:equiv-via-isos}
  A 1-cell $f\cn X \to Y$ in $\sB$ is an equivalence if and only if it
  is a member of an adjoint equivalence.  
\end{proposition}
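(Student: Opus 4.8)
The plan is to prove the two directions separately, with essentially all of the work in the converse. The forward direction is immediate: if $f$ is a member of an adjoint equivalence $(f,g,\eta,\epz)$ in the sense of \cref{definition:internal-equivalence}, then $\eta\cn 1_X \to gf$ and $\epz\cn fg \to 1_Y$ are by definition isomorphisms, and these directly provide the isomorphisms $gf \iso 1_X$ and $1_Y \iso fg$ required by \cref{definition:invertible-1-cell}. For the converse I would start from the equivalence data: a $1$-cell $g\cn Y \to X$ together with two isomorphisms, which after inverting if necessary I may take to be $\eta\cn 1_X \to gf$ and $\epz\cn fg \to 1_Y$. The goal is then to upgrade $(f,g,\eta,\epz)$ to an adjoint equivalence by keeping $f$, $g$, and the unit $\eta$ fixed while replacing $\epz$ by a corrected counit $\epz'$ for which $(f,g,\eta,\epz')$ satisfies the triangle identities of \cref{diagram:triangles}.

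The correction is modeled on the classical $1$-categorical argument. First I would form the $2$-cell
\[ \tau\cn f \xrightarrow{r_f^\inv} f1_X \xrightarrow{1_f * \eta} f(gf) \xrightarrow{a^\inv} (fg)f \xrightarrow{\epz * 1_f} 1_Y f \xrightarrow{\ell_f} f, \]
which is the boundary of the first triangle in \cref{diagram:triangles} and is an isomorphism $f \to f$ because each constituent $2$-cell is invertible. The first triangle identity asserts precisely that $\tau = 1_f$, which need not hold for the given $\epz$. I would therefore define the new counit by whiskering $\tau^\inv$ into $\epz$ on the $f$-side, namely $\epz' = \epz \circ (\tau^\inv * 1_g)\cn fg \to 1_Y$, and then verify, using the interchange law together with naturality of the structure isomorphisms, that the first triangle identity now holds for $(f,g,\eta,\epz')$. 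Since $\tau$ and $\epz$ are isomorphisms, so is $\epz'$.

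The crux is the second triangle identity, and here I would exploit that every $2$-cell in sight is invertible. Forming the analogous automorphism $\sigma\cn g \to g$ given by the boundary of the second triangle in \cref{diagram:triangles} (with $\epz'$ in place of $\epz$), the task is to show $\sigma = 1_g$. The strategy is to relate $\sigma$ to the already-trivialized $\tau$: using naturality of $\eta$ and $\epz'$ as $2$-cells, the pentagon and unity axioms, and the unitor compatibilities of \cref{bicat-left-right-unity,bicat-l-equals-r}, one shows that $\sigma$ is forced to be the identity once $\tau$ is.

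I expect this last step to be the main obstacle, and it is entirely bicategorical in nature. In a strict $2$-category the passage from one triangle identity to the other is a short diagram chase; here every step must instead be mediated by associators and unitors, and the genuine content is checking that these coherence $2$-cells cancel correctly in the pasting diagrams. Once both triangle identities are established with $\eta$ and $\epz'$ invertible, the quadruple $(f,g,\eta,\epz')$ is an adjoint equivalence having $f$ as a member, which completes the converse and hence the proposition.
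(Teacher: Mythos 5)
Your proposal is correct and takes essentially the same approach as the paper, whose entire proof of the converse is a one-sentence appeal to ``a standard argument modifying one of the two isomorphisms'' in \cref{definition:invertible-1-cell}---precisely the corrected-counit construction you spell out. The only point worth naming is that your final step (showing the second triangle identity once the first holds) is not a matter of coherence axioms alone: the concrete mechanism is to whisker the first triangle identity with $g$ (or $f$), use the interchange law, and cancel the invertible $2$-cells---equivalently, use that horizontal composition with an equivalence is locally faithful---which is the precise form of your ``relate $\sigma$ to $\tau$'' strategy.
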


\subsection{Biequivalences}
Now we give the definitions of invertible strong transformation and biequivalence.

\begin{definition}
  Suppose that $F$ and $G$ are pseudofunctors of bicategories $\B \to
  \C$ and $\al\cn F \to G$ a strong transformation.  We say that $\al$
  is \emph{invertible} and write
  \[
  \al\cn F \fto{\hty} G
  \]
  if there is a strong transformation $\al^\bdot\cn G \to F$ together
  with invertible modifications
  \[
  \Theta\cn 1_F \iso \al^\bdot \al \quad \text{and} \quad \Gamma\cn
  \al \al^\bdot \iso 1_G.
  \]
\end{definition}

\begin{remark}
  The invertible strong transformations are the invertible 1-cells in
  a bicategory of pseudofunctors, strong transformations, and
  modifications.  However, we will not need this infrastructure.
  Instead, we will make use of the following characterization
  result.\dqed
\end{remark}

\begin{proposition}\label{proposition:adjoint-equivalence-componentwise}
  Suppose that $F$ and $G$ are pseudofunctors of bicategories $\B \to
  \C$ and suppose that $\al\cn F \to G$ is a strong transformation.  Then $\al$ is
  invertible if and only if each $\al_X\cn F(X) \to G(X)$ is an
  invertible 1-cell in $\C$.
\end{proposition}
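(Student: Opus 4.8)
The plan is to prove the two implications separately, with essentially all of the work in the converse.

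For the forward implication, suppose $\al$ is invertible, so that there is a strong transformation $\al^\bdot\cn G \to F$ together with invertible modifications $\Theta\cn 1_F \iso \al^\bdot\al$ and $\Gamma\cn \al\al^\bdot \iso 1_G$. Evaluating a modification at an object returns a 2-cell between the component 1-cells of its source and target. Since the component 1-cell of the composite $\al^\bdot\al$ at $X$ is $\al^\bdot_X\al_X$ and that of $1_F$ is $1_{FX}$, the components $\Theta_X\cn 1_{FX} \iso \al^\bdot_X\al_X$ and $\Gamma_X\cn \al_X\al^\bdot_X \iso 1_{GX}$ are invertible 2-cells. By \cref{definition:invertible-1-cell} these exhibit $\al_X$ as an equivalence with inverse $\al^\bdot_X$. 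This direction requires no choice.

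For the converse, assume each $\al_X$ is an invertible 1-cell. First I would invoke \cref{proposition:equiv-via-isos} to choose, for each object $X$, an adjoint equivalence $(\al_X,\al_X^\bdot,\eta_X,\epz_X)$ with $\al_X^\bdot\cn GX \to FX$, $\eta_X\cn 1_{FX}\iso \al_X^\bdot\al_X$, and $\epz_X\cn \al_X\al_X^\bdot \iso 1_{GX}$; this is the only place the axiom of choice enters. I then build the candidate inverse $\al^\bdot\cn G \to F$ with these component 1-cells. For each 1-cell $f\cn X\to Y$ the required lax naturality constraint is a 2-cell $\al^\bdot_f\cn (Ff)\al_X^\bdot \to \al_Y^\bdot(Gf)$, and I would define it to be the mate of $\al_f^{-1}$ under the mate bijection of \cref{lemma:mate-pairs} applied to the adjoint equivalences $(\al_X,\al_X^\bdot)$ and $(\al_Y,\al_Y^\bdot)$ together with the 1-cells $a=Ff$ and $b=Gf$. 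Here $\al_f^{-1}\cn \al_Y(Ff) \to (Gf)\al_X$ is invertible because $\al$ is strong, so by \cref{lemma:mate-iso} its mate $\al^\bdot_f$ is invertible as well. Naturality of $\al^\bdot_f$ in $f$ follows from naturality of $\al_f$ together with the observation that the mate construction of \cref{definition:mates} is built by pasting with fixed units, counits, and unitors, hence is itself natural.

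It remains to verify that $\al^\bdot$ satisfies the lax unity axiom \eqref{unity-transformation-pasting} and the lax naturality axiom \eqref{2-cell-transformation-pasting}, and this is where I expect the main obstacle to lie. Each axiom is an equality of pasting diagrams in a single hom category, and I would prove it by exploiting that taking mates is a \emph{bijection} (\cref{lemma:mate-pairs}): the desired identity for $\al^\bdot$ is equivalent to the identity obtained by passing to mates, and after pasting in the triangle identities for the chosen adjoint equivalences and the coherence relations among the unitors (\cref{bicat-left-right-unity,bicat-l-equals-r}) this reduces to the corresponding axiom already known for $\al$. The pseudofunctoriality constraints $F^2,F^0$ and $G^2,G^0$ enter exactly as they do in the axioms for $\al$, since $F$ and $G$ are pseudofunctors; the bookkeeping of these constraints against the unit and counit 2-cells is the most delicate part of the calculation.

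Finally, with $\al^\bdot$ in hand as a strong transformation, I would produce the two invertible modifications by setting $\Theta_X=\eta_X\cn 1_{FX}\iso \al_X^\bdot\al_X=(\al^\bdot\al)_X$ and $\Gamma_X=\epz_X\cn (\al\al^\bdot)_X=\al_X\al_X^\bdot\iso 1_{GX}$. These are componentwise invertible by construction, so it only remains to check the modification axiom \eqref{modification-axiom} for each. For $\Theta$ the axiom relates $\eta_X$ and $\eta_Y$ through the naturality constraint of $1_F$ (a unitor) and that of $\al^\bdot\al$ (a pasting of $\al_f$ with $\al^\bdot_f$); because $\al^\bdot_f$ was defined as the mate of $\al_f^{-1}$, this composite collapses under the triangle identities to precisely the required compatibility, and symmetrically for $\Gamma$ using $\epz$. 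This assembles the inverse data and hence shows that $\al$ is invertible.
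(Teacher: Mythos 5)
Your proposal is correct and takes essentially the same route as the paper's proof: choose adjoint inverses componentwise via \cref{proposition:equiv-via-isos}, define the 2-cell data of $\al^\bdot$ by taking mates (precisely, of the inverses $\al_f^{-1}$, as you note), deduce the transformation axioms and strongness from \cref{lemma:mate-pairs} and \cref{lemma:mate-iso}, and let the componentwise units and counits supply the invertible modifications. You merely spell out steps the paper treats as immediate, such as the forward implication and the verification of the modification axiom.
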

\begin{proof}
  One implication is immediate.  For the other, suppose that $\al$ is
  a strong transformation and each component $\al_X$ is invertible.
  By \cref{proposition:equiv-via-isos} we may choose an adjoint
  inverse $\al^\bdot_X$ for each component.

  We will show that these components assemble to give a strong
  transformation $\al^\bdot\cn G \to F$ together with invertible
  modifications $\eta\cn 1_F \iso \al^\bdot \al$ and $\epz\cn \al
  \al^\bdot \iso 1_G$. We define the 2-cell aspect of $\al^\bdot$ by
  taking component-wise mates of the 2-cells for $\al$.  The
  transformation axioms for $\al^\bdot$ follow from those of $\al$ by
  \cref{lemma:mate-pairs}.  Each mate of an isomorphism is again an
  isomorphism by \cref{lemma:mate-iso}, and therefore $\al^\bdot$ is a
  strong transformation.  The componentwise units and counits
  define the requisite invertible modifications to make $\al$ and
  $\al^\bdot$ invertible strong transformations.
\end{proof}

\begin{definition}\label{definition:biequivalence}
  A pseudofunctor $F\cn \B \to \C$ is a \emph{biequivalence} if
  there exists a pseudofunctor $G\cn \C \to \B$ together with
  invertible strong transformations
  \[
  \Id_\B \fto{\hty} GF \quad \mathrm{ and } \quad FG \fto{\hty} \Id_\C.
  \]
\end{definition}

\begin{definition}\label{definition:es-ef-ff}
  Suppose $F\cn \B \to \C$ is a lax functor of bicategories.
  \begin{itemize}
  \item We say that $F$ is \emph{essentially surjective} if it is
    surjective on adjoint-equivalence classes of objects.
  \item We say that $F$ is \emph{essentially full} if it is surjective
    on isomorphism classes of 1-cells.
  \item We say that $F$ is \emph{fully faithful} if it is a bijection
    on 2-cells.
  \end{itemize}
\end{definition}

\begin{lemma}\label{lemma:biequiv-implies-local-equiv}
  If $F$ is a biequivalence, then each local functor
  \[
  \B(X,Y) \to \C(FX,FY)
  \]
  is essentially surjective and fully faithful.  That is, $F$ is
  essentially full on 1-cells and fully faithful on 2-cells.
\end{lemma}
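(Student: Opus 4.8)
The plan is to reduce the statement to elementary facts about equivalences of categories, applied to the local (hom-category) functors of $F$ and of a chosen inverse pseudofunctor. Since $F$ is a biequivalence, I would first fix a pseudofunctor $G\cn\C\to\B$ together with invertible strong transformations $\eta\cn\Id_\B\fto{\hty}GF$ and $\epz\cn FG\fto{\hty}\Id_\C$. By \cref{proposition:adjoint-equivalence-componentwise} every component $\eta_X\cn X\to GFX$ and $\epz_U\cn FGU\to U$ is an invertible $1$-cell, hence (\cref{proposition:equiv-via-isos}) a member of an adjoint equivalence, with adjoint inverse $\eta_X^\bdot$.

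The main preliminary step is to show that for every pair of objects $X,Y$ the local functor of the composite pseudofunctor $GF$,
\[ (GF)_{X,Y}\cn \B(X,Y)\to\B(GFX,GFY), \]
which is literally the composite $G_{FX,FY}\circ F_{X,Y}$, is an equivalence of categories. To see this, observe that the naturality $2$-cells $\eta_f\cn (GFf)\,\eta_X \to \eta_Y\, f$ of the strong transformation $\eta$ are the components of a natural isomorphism between the two functors $\B(X,Y)\to\B(X,GFY)$ given by $f\mapsto (GFf)\eta_X$ and $f\mapsto \eta_Y f$. The first of these is $(\eta_X)^*\circ(GF)_{X,Y}$ (precomposition with $\eta_X$ after $GF$) and the second is the postcomposition functor $(\eta_Y)_*$. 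Because $\eta_X$ and $\eta_Y$ are equivalences in $\B$, whiskering with them induces equivalences of hom-categories, so both $(\eta_X)^*$ and $(\eta_Y)_*$ are equivalences of categories; two-out-of-three for equivalences then forces $(GF)_{X,Y}$ to be an equivalence. The identical argument using $\epz$ shows that every local functor $(FG)_{U,V}\cn\C(U,V)\to\C(FGU,FGV)$ is an equivalence of categories as well.

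It then remains to deduce, for fixed $X,Y$, that $F_1\defeq F_{X,Y}$ is fully faithful and essentially surjective. Writing also $G_1\defeq G_{FX,FY}$ and $F_2\defeq F_{GFX,GFY}$, the previous step supplies three equivalences of categories: $G_1F_1=(GF)_{X,Y}$, $F_2G_1=(FG)_{FX,FY}$, and $G_{FGFX,FGFY}\circ F_2=(GF)_{GFX,GFY}$. Since the functor applied first in a faithful composite is itself faithful, $F_1$, $G_1$, and $F_2$ are all faithful. Hence $G_1$ is full (given $\delta\cn G_1h\to G_1h'$, fullness of $F_2G_1$ lifts $F_2\delta$, and faithfulness of $F_2$ identifies the lift with $\delta$), so $G_1$ is fully faithful. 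Then $F_1$ is full, since for $\theta\cn F_1f\to F_1f'$ the $2$-cell $G_1\theta$ lifts through the full functor $G_1F_1$ to some $\gamma$ with $G_1F_1\gamma=G_1\theta$, whence $F_1\gamma=\theta$ by faithfulness of $G_1$. Finally $F_1$ is essentially surjective: given a $1$-cell $h\cn FX\to FY$, essential surjectivity of $G_1F_1$ yields $f$ and an isomorphism $G_1h\iso G_1F_1f=G_1(F_1f)$, which lifts along the fully faithful $G_1$ to an isomorphism $h\iso F_1f$. Thus each $F_{X,Y}$ is essentially surjective and fully faithful, which is exactly the assertion that $F$ is essentially full on $1$-cells and fully faithful on $2$-cells.

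I expect the only genuine obstacle to be the preliminary step: the careful verification that the $2$-cells $\eta_f$ are natural in $f$, and that whiskering by an equivalence $1$-cell is an equivalence of hom-categories (the pseudo-inverse being whiskering by the adjoint $\eta_X^\bdot$, with the comparison isomorphisms assembled from the unit, counit, associator, and unitors). Once those two facts are established, the remainder is routine two-out-of-three bookkeeping for the properties faithful, full, and essentially surjective, arranged so as to avoid circularity by invoking that $GF$ and $FG$ are local equivalences at enough pairs of objects.
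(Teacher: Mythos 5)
Your proof is correct and takes essentially the same approach as the paper's: both use that $GF$ and $FG$ are local equivalences of hom-categories, deduce full faithfulness of $F$ (and $G$) by factoring the fully faithful composites, and then transfer essential surjectivity of $(GF)_{X,Y}$ along the fully faithful $G_{FX,FY}$ to obtain essential fullness of $F$. The only difference is that the paper asserts the local-equivalence statement for $GF$ and $FG$ without proof, whereas you justify it via the natural isomorphism $(\eta_X)^* \circ (GF)_{X,Y} \iso (\eta_Y)_*$ (which is built into the paper's definition of a strong transformation), whiskering by adjoint equivalences, and two-out-of-three --- a worthwhile detail the paper leaves implicit.
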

\begin{proof}
  If $F$ and $G$ are inverse biequivalences, then one has local
  equivalences of categories
  \[
  \B(X,Y) \fto{\hty} \B((GF)X, (GF)Y) \quad \text{and} \quad
  \C(Z,W) \fto{\hty} \C((FG)Z, (FG)W)
  \]
  for $X$, $Y$ in $\B$ and $Z$, $W$ in $\C$.  The composites $GF$ and $FG$
  are fully faithful on 2-cells, and therefore by factoring the
  equivalences above one concludes that $F$ and
  $G$ are fully faithful on 2-cells.  Moreover, given any 1-cell
  $h \cn FX \to FY$, the composite $GF$ is essentially full on 1-cells and
  therefore there is some $\overline{h}\cn X \to
  Y$ such that $(GF)\overline{h} \iso Gh$.  But since $G$ is fully faithful, this implies that
  $F\overline{h} \iso h$ in $\B(FX,FY)$.  Thus $F$ is essentially
  full on 1-cells.
\end{proof}
\begin{remark}
  By the Whitehead Theorem for 1-categories,
  \cref{lemma:biequiv-implies-local-equiv} implies that a
  biequivalence is a local equivalence of categories, but we will not
  make use of this conclusion.\dqed
\end{remark}


\section{The Lax Slice Bicategory}\label{sec:lax-slice}

In this section we describe a bicategorical generalization of slice
categories.

\begin{definition}
  Given a lax functor $F \cn \B \to \C$ and an object $X \in \C$, the
  \emph{lax slice} bicategory $F \dar X$ consists of the following.
  \begin{enumerate}
  \item Objects are pairs $(A,f_A)$ where $A \in \B$ and $FA \fto{f_A} X$ in
    $\C$.
  \item 1-cells $(A_0, f_0) \to (A_1,f_1)$ are 
    pairs $(p,\theta_p)$ where $A_0 \fto{p} A_1$ in $\B$ and
    $\theta_p\cn f_{0} \to f_{1} (Fp)$ in $\C$.  We depict this as a triangle.
    \[
      \begin{tikzpicture}[x=25mm,y=25mm]
        \draw[0cell] 
        (0,0) node (x) {X}
        (120:1) node (0) {FA_0}
        (60:1) node (1) {FA_1}
        ;
        \draw[1cell] 
        (0) edge[swap] node {f_0} (x)
        (1) edge node {f_1} (x)
        (0) edge node {Fp} (1) 
        ;
        \draw[2cell] 
        (-.05,.6) node[rotate=30,font=\Large] {\Rightarrow}
        node[below right] {\theta_p}
        ;
      \end{tikzpicture}
    \]
  \item 2-cells $(p_0,\theta_0) \to (p_1,\theta_1)$ are
    singletons $(\al)$ where $\al$ is a 2-cell $p_0 \to p_1$ in $\B$ such that $F\al$
    satisfies the equality shown in the pasting diagram below, known as
    the \emph{ice cream cone condition} with respect to $\theta_0$
    and $\theta_1$.
    \[
      \begin{tikzpicture}[x=35mm,y=35mm, scale=.85]
        \draw (0,0) node[font=\Large] {=};
        \newcommand{\boundary}{
          \draw[0cell] 
          (0,0) node (x) {X}
          (120:1) node (0) {FA_0}
          (60:1) node (1) {FA_1}
          ;
          \draw[1cell] 
          (0) edge[swap] node {f_0} (x)
          (1) edge node {f_1} (x)
          (0) edge[bend left] node (F1) {Fp_1} (1) 
          ;
        }
        \begin{scope}[shift={(-.8,-.5)}]
          \boundary
          \draw[1cell] 
          (0) edge[swap, bend right] node (F0) {Fp_0} (1) 
          ;
          \draw[2cell] 
          (0,.38) node[rotate=30,font=\Large] (A) {\Rightarrow}
          (A) ++(.05,-.1) node {\theta_0}
          node[between=F0 and F1 at .5, shift={(-.1,0)}, rotate=90, font=\Large] (Fal) {\Rightarrow}
          (Fal) node[right] {F\al}
          ;
        \end{scope}
        \begin{scope}[shift={(.8,-.5)}]
          \boundary
          \draw[1cell] 
          ;
          \draw[2cell] 
          (0,.6) node[rotate=30,font=\Large] (B) {\Rightarrow}
          (B) ++(.05,-.1) node {\theta_1}
          ;
        \end{scope}
      \end{tikzpicture}
    \]
  \end{enumerate}
  We describe the additional data of $F \dar X$ and prove that it
  satisfies the bicategory axioms in \cref{defprop:lax-slice}.
\end{definition}

\begin{proposition}\label{defprop:lax-slice}
  Given a lax functor $F\cn \B \to \C$ and an object $X \in \C$, the
  lax slice $F \dar X$ is a bicategory.
\end{proposition}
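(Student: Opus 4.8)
The plan is to produce all of the structure of $F \dar X$ by carrying the $\B$-coordinate directly from $\B$ and assembling the required $2$-cells in $\C$ from the constraints $F^2$ and $F^0$ together with the unitors and associator of $\C$. For an object $(A,f_A)$ I take the identity $1$-cell to be $(1_A,\theta_{1_A})$, where $\theta_{1_A}\cn f_A \to f_A(F1_A)$ is the composite of $r^{-1}_{f_A}\cn f_A \to f_A 1_{FA}$ followed by $1_{f_A} * F^0_A$. For composable $1$-cells $(p,\theta_p)\cn (A_0,f_0)\to(A_1,f_1)$ and $(q,\theta_q)\cn(A_1,f_1)\to(A_2,f_2)$ I define the horizontal composite to be $(qp,\theta_{qp})$, where $\theta_{qp}\cn f_0 \to f_2\,F(qp)$ is the vertical composite of $\theta_p$, then $\theta_q * 1_{Fp}$, then the associator of $\C$, and finally $1_{f_2} * F^2_{q,p}$. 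On $2$-cells every operation is inherited: a slice $2$-cell $(\alpha)$ is carried by $\alpha$, vertical and horizontal composition are $\circ$ and $*$ in $\B$, and the associator and unitor components of $F \dar X$ have underlying $2$-cells $a_{r,q,p}$, $\ell_p$, and $r_p$ of $\B$.

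With these definitions the work divides into two kinds of statements: that each inherited $2$-cell satisfies the ice cream cone condition with respect to the relevant $\theta$-data, and that the bicategory axioms hold. For the hom-categories the condition is easy: an identity $2$-cell satisfies it since $F1_p = 1_{Fp}$, and a vertical composite does because $F$ is a functor on hom-categories and $*$ obeys interchange, so $1_{f_1} * F(\beta\circ\alpha) = (1_{f_1} * F\beta)\circ(1_{f_1} * F\alpha)$. Consequently the hom-categories are categories, and functoriality of horizontal composition on $2$-cells follows at once from the same facts in $\B$. The substantive ice cream cone checks are those for (i) horizontal composites $\beta * \alpha$, (ii) the associator components $a_{r,q,p}$, and (iii) the unitor components $\ell_p$ and $r_p$.

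Each of these is a pasting computation in $\C$. For (i) I would expand $\theta_{q_1p_1}$ and $\theta_{q_0p_0}$ by the definition above and rewrite one into the other using the two hypothesized ice cream cone conditions for $\alpha$ and $\beta$, the naturality of the associator of $\C$, and the naturality of $F^2$. Step (ii) is the crux: unravelling $\theta_{(rq)p}$ and $\theta_{r(qp)}$ yields two maximal pastings built from $\theta_r,\theta_q,\theta_p$, two instances of $F^2$, and associators of $\C$, and the identity to be verified is precisely the lax associativity axiom \eqref{f2-bicat} of $F$ combined with the pentagon axiom \eqref{bicat-pentagon} of $\C$ and naturality of the associator of $\C$. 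Step (iii) reduces in the same way to the lax left and right unity axioms \eqref{f0-bicat} of $F$ together with \cref{bicat-left-right-unity,bicat-l-equals-r} and the unit coherence of $\C$.

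Finally, the bicategory axioms for $F \dar X$ are nearly free once the coherence cells are identified with those of $\B$. Since a slice $2$-cell is uniquely determined by its underlying $\B$-cell, two slice $2$-cells are equal if and only if their underlying cells are, and the associator and unitors of the slice agree on underlying cells with $a$, $\ell$, and $r$. Hence the naturality of the associator and unitors, the unity axiom \eqref{bicat-unity}, and the pentagon axiom \eqref{bicat-pentagon} for $F \dar X$ all follow by applying the forgetful assignment $(\alpha)\mapsto\alpha$ and invoking the corresponding statements in $\B$; invertibility of the associator and unitors is inherited the same way. I expect the main obstacle to be the associator ice cream cone condition (ii), since it is the single step that must deploy the full lax associativity axiom of $F$ and the pentagon coherence of $\C$ together, correctly organized as a single pasting equality.
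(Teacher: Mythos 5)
Your proposal is correct and follows essentially the same route as the paper: identical definitions of the identity $1$-cells, composites, and coherence data (your $\theta_{1_A}$ is the paper's $r'$, your $\theta_{qp}$ is the explicit composite of its pasting diagram), with the same reduction of the ice cream cone checks to naturality of $F^2$, the lax associativity axiom \eqref{f2-bicat}, and the lax unity axioms \eqref{f0-bicat}, and the same inheritance of the hom-category structure, naturality, unity, and pentagon axioms from $\B$. The only difference is presentational: where you invoke the pentagon and unit coherence of $\C$ to align parenthesizations of explicit composites, the paper organizes the same computations as pasting diagrams whose composites are well-defined by the Bicategorical Pasting Theorem of \cite{JYpasting}.
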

\begin{proof}
  The objects, 1-cells, and 2-cells of $F \dar X$ are defined above.
  We structure the rest of the proof as follows:
  \begin{enumerate}
  \item\label{it:slice-1} define identity 1-cells and 2-cells;
  \item\label{it:slice-2} define horizontal and vertical composition for 1-cells and
    2-cells;
  \item\label{it:slice-3} verify each collection of 1-cells and 2-cells between a given pair of
    objects forms a category;
  \item\label{it:slice-4} verify functoriality of horizontal composition;
  \item\label{it:slice-5} define components of the associator and unitor;
  \item\label{it:slice-6} verify that the associator and unitors are natural
    isomorphisms; and
  \item\label{it:slice-7} verify the pentagon and unity axioms.
  \end{enumerate}
  \newcommand{\step}[1]{\textbf{Step (\ref{it:slice-#1}).}}

  \step{1} The identity 1-cell for an object $(A,f_A)$ is $(1_A,r')$
  where
  \[
  r'= (1_{f_A} * F^0) \circ r^{-1},
  \]
  shown in the pasting diagram below.
  \begin{equation}\label{slice-1-1}
    \begin{tikzpicture}[x=35mm,y=35mm,baseline=(B.base),scale=.85]
      \draw[0cell] 
      (0,0) node (x) {X}
      (120:1) node (a) {FA}
      (60:1) node (b) {FA}
      ;
      \draw[1cell] 
      (a) edge[swap] node {f_{A}} (x)
      (b) edge node {f_{A}} (x) 
      (a) edge[bend left] node (T) {F1_{A}} (b) 
      (a) edge[swap, bend right] node (B) {1_{FA}} (b) 
      ;
      \draw[2cell] 
      node[between=x and B at .6, rotate=30, font=\Large] (A) {\Rightarrow}
      (A) ++(.02,0) node[below] {r^\inv}
      node[between=a and b at .5, rotate=90, font=\Large] (C) {\Rightarrow}
      (C) node[right] {F^0_{A}}
      ;
    \end{tikzpicture}
  \end{equation}
  The identity 2-cell for a 1-cell $(p,\theta)$ is given by $(1_p)$,
  noting that this satisfies the necessary condition because $F1_p =
  1_{Fp}$.
  
  \step{2} The horizontal composite of 1-cells
  \[
    (A_0,f_0) \fto{(p_0,\theta_0)}
    (A_1,f_1) \fto{(p_1,\theta_1)}
    (A_2,f_2)
  \]
  is $(p_1p_0, {\theta'})$, where ${\theta'}$ is given by the
  composite of the
  pasting diagram formed from $\theta_0$, $\theta_1$, and $F^2$ as shown below.
  \begin{equation}\label{slice-2-1}
    \begin{tikzpicture}[x=15mm,y=20mm,baseline=(f1.base),scale=.85]
      \draw[0cell] 
      (0,0) node (x) {X}
      (-2,2) node (a) {FA_0}
      (0,1.75) node (b) {FA_1}
      (2,2) node (c) {FA_2}
      ;
      \draw[1cell] 
      (a) edge[swap] node (f0) {f_{0}} (x)
      (b) edge node (f1) {f_{1}} (x) 
      (c) edge node (f2) {f_{2}} (x) 
      (a) edge[bend right=10] node (b0) {Fp_0} (b) 
      (b) edge[bend right=10] node (b1) {Fp_1} (c)
      (a) edge[bend left] node (T) {F(p_1p_0)} (c)
      ;
      \draw[2cell] 
      node[between=f0 and b at .5, rotate=45, font=\Large] (A) {\Rightarrow}
      (A) node[below right] {\theta_0}
      node[between=f2 and b at .5, rotate=45, font=\Large] (B) {\Rightarrow}
      (B) node[below right] {\theta_1}
      node[between=b and T at .5, rotate=90, font=\Large] (C) {\Rightarrow}
      (C) node[right] {F^2_{p_1, p_0}}
      ;
    \end{tikzpicture}
  \end{equation}

  Horizontal and vertical composites of 2-cells in $F \dar X$ are
  given by their composites in $\B$, as we now explain.
  Given 1-cells and 2-cells
  \begin{equation}\label{slice-2-2}
    \begin{tikzpicture}[x=40mm,y=20mm,baseline=(a0.base),scale=.8]
      \draw[0cell] 
      (0,0) node (a0) {(A_0,f_0)}
      (1,0) node (a1) {(A_1,f_1)}
      (2,0) node (a2) {(A_2,f_2)}
      ;
      \draw[1cell] 
      (a0) edge[bend right, swap] node {(p_0,\theta_0)} (a1) 
      (a1) edge[bend right, swap] node {(p_1,\theta_1)} (a2) 
      (a0) edge[bend left] node (X) {(p_0',\theta_0')} (a1) 
      (a1) edge[bend left] node {(p_1',\theta_1')} (a2) 
      ;
      \draw[2cell] 
      node[between=a0 and a1 at .5, rotate=90, font=\Large] (al0) {\Rightarrow}
      (al0) node[right] {(\al_0)}
      node[between=a1 and a2 at .5, rotate=90, font=\Large] (al1) {\Rightarrow}
      (al1) node[right] {(\al_1)}
      ;
    \end{tikzpicture}
  \end{equation}
  the following equalities of pasting diagrams show that $F(\al_1 *
  \al_0)$ satisfies the necessary condition for $\al_1 * \al_0$ to define a 2-cell in $F
  \dar X$.
  The first equality follows by naturality of $F^2$. The second follows by the conditions for
  $(\al_0)$ and $(\al_1)$ separately.
  \begin{equation*}
    \begin{tikzpicture}[x=11mm,y=18mm,baseline={(A0.base)}, scale=.97]
      \def\w{3.5} 
      \def\h{-2.3} 
      \def\m{.6} 
      \newcommand{\boundary}{
        \draw[0cell] 
        (0,0) node (x) {X}
        (-2,2) node (a) {FA_0}
        (0,1.5) node (b) {FA_1}
        (2,2) node (c) {FA_2}
        ;
        \draw[1cell] 
        (a) edge[swap] node (f0) {f_{0}} (x)
        (b) edge node (f1) {f_{1}} (x) 
        (c) edge node (f2) {f_{2}} (x) 
        (a) edge[bend left=60, looseness=1.5] node (T') {F(p_1'p_0')} (c)
        ;
        \draw[2cell] 
        ;
      }
      \begin{scope}[shift={(0,0)}]
        \boundary
        \draw[1cell] 
        (a) edge[bend left=10] node (T) {F(p_1p_0)} (c) 
        (a) edge[bend right=20] node (b0) {} (b) 
        (b) edge[bend right=20] node (b1) {} (c)
        ;
        \draw[2cell] 
        node[between=T and T' at .4, shift={(-.4,0)}, rotate=90, font=\Large] (A) {\Rightarrow}
        (A) node[right] {F(\al_1 * \al_0)} 
        node[between=b and T at .5, rotate=90, font=\Large] (C) {\Rightarrow}
        (C) node[right] {F^2_{p_1, p_0}}
        node[between=f0 and b at .5, rotate=45, font=\Large] (A) {\Rightarrow}
        (A) node[below right] {\theta_0}
        node[between=f2 and b at .55, rotate=45, font=\Large] (B) {\Rightarrow}
        (B) node[below right] {\theta_1}
        ;
      \end{scope}
      \begin{scope}[shift={(\w,\h)}]
        \boundary
        \draw[font=\LARGE] (x) ++(130:3.4) node[rotate=-45] (eq) {=};
        \draw[font=\LARGE] (x) ++(50:3.4) node[rotate=45] (eq) {=};
        \draw[1cell] 
        (a) edge[bend left=45, looseness=1.25] node {} (b) 
        (b) edge[bend left=45, looseness=1.25] node {} (c) 
        (a) edge[bend right=20] node (b0) {} (b) 
        (b) edge[bend right=20] node (b1) {} (c)
        ;
        \draw[2cell] 
        node[between=b and T' at .6, shift={(-.4,0)}, rotate=90, font=\Large] (A0) {\Rightarrow}
        (A0) node[right] {F^2_{p_1',p_2'}}
        node[between=a and b at .5, shift={(-.2,.1)}, rotate=90, font=\Large] (Fal0) {\Rightarrow}
        (Fal0) node[right] {F\al_0} 
        node[between=b and c at .5, shift={(-.25,.15)}, rotate=90, font=\Large] (Fal1) {\Rightarrow}
        (Fal1) node[right] {F\al_1} 
        node[between=f0 and b at .5, rotate=45, font=\Large] (A) {\Rightarrow}
        (A) node[below right] {\theta_0}
        node[between=f2 and b at .55, rotate=45, font=\Large] (B) {\Rightarrow}
        (B) node[below right] {\theta_1}
        ;
      \end{scope}
      \begin{scope}[shift={(\w+\w,0)}]
        \boundary
        \draw[1cell] 
        (a) edge[bend left=45, looseness=1.25] node {} (b) 
        (b) edge[bend left=45, looseness=1.25] node {} (c) 
        ;
        \draw[2cell] 
        node[between=b and T' at .6, shift={(-.4,0)}, rotate=90, font=\Large] (A2) {\Rightarrow}
        (A2) node[right] {F^2_{p_1',p_2'}}
        node[between=f0 and b at .5, shift={(135:.25)}, rotate=45, font=\Large] (A3) {\Rightarrow}
        (A3) node[below right] {\theta_0'}
        node[between=f2 and b at .55, shift={(45:.25)}, rotate=45, font=\Large] (B2) {\Rightarrow}
        (B2) node[below right] {\theta_1'}
        ;
      \end{scope}
    \end{tikzpicture}
  \end{equation*}
  Likewise, given $\al$ and $\al'$ as below,  
  \begin{equation}\label{slice-2-4}
    \begin{tikzpicture}[x=45mm,y=20mm,baseline=(a0.base),scale=.85]
      \draw[0cell] 
      (0,0) node (a0) {(A_0,f_0)}
      (1,0) node (a1) {(A_1,f_1)}
      ;
      \draw[1cell] 
      (a0) edge[bend right=60, swap, looseness=1.5] node (p) {(p,\theta)} (a1) 
      (a0) edge node (p') {(p',\theta')} (a1) 
      (a0) edge[bend left=60, looseness=1.5] node (p'') {(p'',\theta'')} (a1) 
      ;
      \draw[2cell] 
      node[between=p and p' at .5, rotate=90, font=\Large] (al0) {\Rightarrow}
      (al0) node[right] {(\al)}
      node[between=p' and p'' at .5, rotate=90, font=\Large] (al1) {\Rightarrow}
      (al1) node[right] {(\al')}
      ;
    \end{tikzpicture}
  \end{equation}
  the composite $\al'\, \al$ satisfies the necessary condition to
  define a 2-cell
  \[
    (\al'\,\al)\cn (p,\theta) \to (p'', \theta'')
  \]
  because $F$ is functorial with respect to composition of 2-cells.

  \step{3} Vertical composition in $F\dar X$ is strictly associative
  and unital because it is defined $\B$.  Therefore each collection of
  1-cells and 2-cells between a given pair of objects forms a
  category.

  \step{4} Likewise, because horizontal composition of 2-cells in
  $F\dar X$ is defined by the horizontal composites in $\B$, and these
  are functorial, it follows that horizontal composition of 2-cells in
  $F \dar X$ is functorial.

  \step{5} The remaining data to describe in $F \dar X$ are the associator and
  two unitors.  Consider a composable triple of 1-cells
    \[
    \begin{tikzpicture}[x=30mm,y=20mm]
      \draw[0cell] 
      (0,0) node (a0) {(A_0,f_0)}
      (1,0) node (a1) {(A_1,f_1)}
      (2,0) node (a2) {(A_2,f_2)}
      (3,0) node (a3) {(A_3,f_3).}
      ;
      \draw[1cell] 
      (a0) edge node {(p_0,\theta_0)} (a1) 
      (a1) edge node {(p_1,\theta_1)} (a2) 
      (a2) edge node {(p_2,\theta_2)} (a3) 
      ;
    \end{tikzpicture}
  \]
  Lax associativity \eqref{f2-bicat} for $F$ gives an equality of
  pasting diagrams shown below.
  \begin{equation}\label{slice-5-1}
    \begin{tikzpicture}[x=20mm,y=20mm,baseline={(0,1)},scale=.75]
      \draw[font=\Large] (2.55,.5) node (eq) {=}; 
      \newcommand{\boundary}{
        \draw[0cell] 
        (0,0) node (0) {FA_0}
        (3,0) node (3) {FA_3}
        (1.25,-1) node (4) {FA_1}
        ;
        \draw[1cell] 
        (0) edge[swap] node (c0) {Fp_0} (4)
        (4) edge[swap] node (21) (p21) {(Fp_2) \circ (Fp_1)} (3)
        (0) edge[bend left=85,looseness=1.9] node (c210R) {F(p_2(p_1p_0))} (3) 
        ;
        \draw[2cell] 
        ;
      }
      \begin{scope}[shift={(3,0)}]
        \boundary
        \draw[0cell] 
        (2,0) node (2) {FA_2}
        ;
        \draw[1cell] 
        (0) edge[swap] node[scale=.9] (p10) {(Fp_1) \circ (Fp_0)} (2) 
        (2) edge node (c2) {F(p_2)} (3)
        (0) edge[bend left=45,looseness=1.25] node[pos=.5] (c10) {F(p_1p_0)} (2)
        ;
        \draw[2cell] 
        (4) ++(0,.5) node[rotate=90, font=\Large] {\Rightarrow}
        node[right] {a_\C}
        node[between=p10 and c10 at .5, rotate=90, font=\Large] (F10) {\Rightarrow}
        (F10) node[right] {F^2}
        node[between=2 and c210R at .5, rotate=90, font=\Large] (F210R) {\Rightarrow}
        (F210R) node[right] {F^2}
        ;
      \end{scope}
      \begin{scope}[shift={(-1,0)}]
        \boundary
        \draw[0cell] 
        ;
        \draw[1cell] 
        (4) edge[bend left=40, looseness=1.1] node[pos=.8] (c21) {F(p_2p_1)} (3)
        (0) edge[bend left=50,looseness=1.15] node (c210L) {F((p_2p_1)p_0)} (3) 
        ;
        \draw[2cell] 
        node[between=p21 and c21 at .5, shift={(180:.15)}, rotate=120, font=\Large] (F21) {\Rightarrow}
        (F21) ++(210:.2) node {F^2}
        node[between=c0 and c210L at .5, shift={(0:.1)}, rotate=90, font=\Large] (F210L) {\Rightarrow}
        (F210L) node[left] {F^2}
        node[between=c210L and c210R at .5, rotate=90, font=\Large] (Fa) {\Rightarrow}
        (Fa) node[right] {Fa_\B}
        ;
      \end{scope}
    \end{tikzpicture}
  \end{equation}
  Combining these with the triangles
  \begin{equation}\label{slice-5-2}
    \begin{tikzpicture}[x=24mm,y=20mm,baseline=(f0.base)]
      \draw[0cell] 
      (0,0) node (0) {FA_0}
      (1,0) node (1) {FA_1}
      (2,0) node (2) {FA_2}
      (3,0) node (3) {FA_3}
      (1.5,-1.5) node (x) {X}
      ;
      \draw[1cell] 
      (0) edge node {Fp_0} (1) 
      (1) edge node {Fp_1} (2) 
      (2) edge node {Fp_2} (3)
      (0) edge[swap] node[pos=.33] (f0) {f_0} (x) 
      (1) edge[swap] node[pos=.33] (f1) {f_1} (x) 
      (2) edge node[pos=.33] (f2) {f_2} (x) 
      (3) edge node[pos=.33] (f3) {f_3} (x) 
      ;
      \draw[2cell] 
      node[between=f0 and 1 at .5, shift={(0,-.1)}, rotate=45, font=\Large] (th0) {\Rightarrow}
      (th0) node[above left] {\theta_0}
      node[between=f1 and 2 at .5, shift={(0,-.1)}, rotate=45, font=\Large] (th1) {\Rightarrow}
      (th1) node[above left] {\theta_1}
      node[between=f3 and 2 at .5, shift={(0,-.1)}, rotate=45, font=\Large] (th2) {\Rightarrow}
      (th2) node[above left] {\theta_2}
      ;
    \end{tikzpicture}
  \end{equation}
  shows that $Fa_\B$ satisfies the relevant ice cream cone condition and hence
  $a_\B$ defines a 2-cell
  \[
    (a_\B)\cn ((p_2,\theta_2)(p_1,\theta_1))\, (p_0,\theta_0) \to
    (p_2,\theta_2) \, ((p_1,\theta_1) (p_0,\theta_0)).
  \]
  in $F\dar X$.  Note that one must implicitly make use of associators to
  interpret pasting diagrams of three triangles; the component of 
  $a_\C$ in \eqref{slice-5-1} cancels with its inverse to form the
  composite in the target of $(a_\B)$.

  The left and right unitors are defined similarly: the unitors $r_\B$ and $\ell_\B$
  satisfy the appropriate ice cream cone conditions and therefore given
  a 1-cell $(p,\theta)\cn (a_0,f_0) \to (a_1, f_1)$, we have 2-cells
  \[
    (r_\B)\cn (p,\theta) (1_{A_0},r') \to (p,\theta) \quad \mathrm{ and }
    \quad
    (\ell_\B) \cn (1_{A_1},r') (p,\theta) \to (p,\theta).
  \]

  \step{6} Naturality of the associator and unitors defined in the
  previous step is a consequence of the corresponding naturality in
  $\B$ and $\C$ together with naturality of $F^0$ and $F^2$.
  Moreover, each component is an isomorphism because a lax functor
  preserves invertibility of 2-cells.

  \step{7} Because the associator and unitor are defined by the
  corresponding components in $\B$, it follows that they satisfy the
  unity and pentagon axioms, \eqref{bicat-unity} and
  \eqref{bicat-pentagon}.
\end{proof}

\begin{proposition}\label{lemma:base-change-functor}
  Suppose $F\cn \B \to \C$ is a lax functor of bicategories.  Given a
  1-cell $u\cn X \to Y$, there is a strict functor
  \[
    F \dar u \cn (F \dar X) \to (F \dar Y)
  \]
  induced by whiskering with $u$.
\end{proposition}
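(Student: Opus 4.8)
The plan is to define $F \dar u$ by post-composition with $u$ on the ``base'' components, correcting types with the associator of $\C$, while leaving the underlying data in $\B$ untouched. On objects I send $(A,f_A) \mapsto (A, uf_A)$, noting $uf_A \cn FA \to Y$. On a 1-cell $(p,\theta_p)\cn (A_0,f_0) \to (A_1,f_1)$ I set $(F \dar u)(p,\theta_p) = (p,\sigma_p)$, where
\[
\sigma_p = a^{-1} \circ (1_u * \theta_p) \cn uf_0 \to (uf_1)(Fp)
\]
is the whiskered 2-cell $1_u * \theta_p \cn uf_0 \to u(f_1(Fp))$ followed by the inverse associator $a^{-1}\cn u(f_1(Fp)) \to (uf_1)(Fp)$. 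On a 2-cell $(\al)\cn (p_0,\theta_0) \to (p_1,\theta_1)$ I send $(\al) \mapsto (\al)$, keeping the same 2-cell $\al\cn p_0 \to p_1$ in $\B$. Since $\theta_p$ is an arbitrary 2-cell of $\C$, $\sigma_p$ is automatically a legitimate 1-cell datum, so the only well-definedness to check is that $\al$ still satisfies the ice cream cone condition with respect to $\sigma_0$ and $\sigma_1$.

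Verifying this ice cream cone condition is the first step. The condition for $(\al)$ in $F \dar X$ is an equality of pasting diagrams built from $\theta_0$, $\theta_1$, and $F\al$. I would whisker that entire equality on the left with $1_u$ and then splice in the associator $a^{-1}$; because $1_u * (-)$ is functorial and the associator is natural, whiskering transports the $F \dar X$ equality to exactly the $F \dar Y$ equality relating $\sigma_0$, $\sigma_1$, and $F\al$. Thus $(\al)$ is a 2-cell $(p_0,\sigma_0) \to (p_1,\sigma_1)$ in $F \dar Y$.

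Next I check that $F \dar u$ is a functor on each hom category and that it is \emph{strict}. Functoriality on hom categories is immediate: vertical composition and identity 2-cells in both slices are computed in $\B$, and $F \dar u$ is the identity on the underlying 2-cell $\al$. For strictness I must show that $F^0$ and $F^2$ are identities, i.e.\ that $F \dar u$ carries identity 1-cells and horizontal composites in $F \dar X$ to the corresponding ones in $F \dar Y$ on the nose. For the identity at $(A,f_A)$, whose slice datum is $r' = (1_{f_A} * F^0) \circ r^{-1}$ from \eqref{slice-1-1}, I compute $\sigma_{1_A} = a^{-1} \circ (1_u * r')$ and simplify it using the interchange law, the naturality of the associator in its last variable (to move $a^{-1}$ past $1_u * (1_{f_A} * F^0)$), and the right unity relation $r_{uf_A} = (1_u * r_{f_A}) \circ a$ of \cref{bicat-left-right-unity}. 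This yields exactly the identity datum $(1_{uf_A} * F^0) \circ r^{-1}$ of $(A,uf_A)$ in $F \dar Y$.

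For horizontal composites, given $(p_0,\theta_0)$ and $(p_1,\theta_1)$ with composite datum $\theta'$ as in \eqref{slice-2-1}, I must verify that $a^{-1} \circ (1_u * \theta')$ equals the composite datum $\sigma''$ assembled from $\sigma_0$, $\sigma_1$, and $F^2$ in $F \dar Y$. After whiskering with $u$, both are composites of $\theta_0$, $\theta_1$, $F^2_{p_1,p_0}$, and associator components, and they differ only in the bracketing of the fourfold horizontal composite involving $u$, $f_2$, $Fp_1$, and $Fp_0$. Reconciling these bracketings is the main obstacle, and I expect it to be resolved by the pentagon axiom \eqref{bicat-pentagon} in $\C$ together with the naturality of the associator (used to move the whiskered instances of $\theta_1 * 1_{Fp_0}$ and $F^2_{p_1,p_0}$ past the associator components). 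Granting these coherence identities gives $a^{-1} \circ (1_u * \theta') = \sigma''$, so $F^2$ for $F \dar u$ is the identity; combined with the identity and functoriality checks above, this exhibits $F \dar u$ as a strict functor.
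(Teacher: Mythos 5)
Your proposal is correct and takes essentially the same approach as the paper: identical assignments on 0-, 1-, and 2-cells, with strict unitality proved via the right unity property of \cref{bicat-left-right-unity} together with naturality of the associator. If anything you are slightly more thorough than the paper's proof, since you explicitly verify that the ice cream cone condition is preserved on 2-cells and correctly identify that the pentagon axiom \eqref{bicat-pentagon} (not merely associator naturality) is what reconciles the bracketings in the horizontal-composite check.
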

\begin{proof}
  The assignment on 0-, 1- and 2-cells, respectively, is given by
  \begin{align*}
    (A,f_A) & \mapsto (A, uf_A)\\
    (p,\theta) & \mapsto (p, a_\C^\inv \circ (1_u * \theta)) \\
    (\al) & \mapsto (\al).
  \end{align*}
  where the associator $a_\C$ is used to ensure that the target of the
  2-cell $a_\C^\inv \circ (1_u * \theta)$ is $(u f_{A_1}) \circ (Fp)$.

  To show that $F \dar u$ is strictly unital, recall that the identity
  1-cell of $(A,f_A)$ is $(1_A,r')$ where
  \[
  r' = (1_{f_A} * F^0) \circ r^\inv
  \]
  is shown in \eqref{slice-1-1}.  Then, using the functoriality of
  $(1_u * -)$, the 2-cell
  component of $(F\dar u)(1_A,r')$ is shown along the top and right of
  the diagram below. The right unity property from
  \cref{bicat-left-right-unity} together with naturality of $a_\C$
  shows that the diagram commutes and therefore $F\dar u$ is strictly
  unital.
  \[
  \begin{tikzpicture}[x=30mm,y=20mm]
    \draw[0cell] 
    (0,0) node (a) {uf_A}
    (1,0) node (b) {u(f_A 1_{FA})}
    (2.5,0) node (c) {u(f_A F1_A)}
    (1,-1) node (d) {(uf_A)1_{FA}}
    (2.5,-1) node (e) {(uf_A) F1_A}
    ;
    \path[1cell] 
    (a) edge node {1_u * r^\inv} (b)
    (b) edge node {1_u * (1_{f_A} * F^0)} (c)
    (c) edge node {a^\inv_\C} (e)
    (b) edge node {a^\inv_\C} (d)
    (d) edge[swap] node {1_{uf_A} * F^0} (e)
    (a) edge[swap] node {r^\inv} (d)
    ;
    \draw[2cell] 
    
    ;
  \end{tikzpicture}
  \]
  A similar calculation using the functoriality of whiskering and
  naturality of the associator shows that $F\dar u$ is strictly
  functorial with respect to horizontal composition.
\end{proof}

\begin{definition}
  We call the strict functor $F \dar u$ constructed in
  \cref{lemma:base-change-functor} the \emph{change-of-slice functor}.
\end{definition}

\section{Lax Terminal Objects in Lax Slices}\label{sec:lax-terminal}

In this section we introduce a specialized notion of terminal object
called inc-lax terminal and prove two key results.  First,
\cref{proposition:lax-slice-lax-terminal} proves that if a lax functor
$F$ is essentially surjective, essentially full, and fully faithful,
then the lax slices can be equipped with our specialized form of
terminal object.  Second, \cref{lemma:lax-slice-change-fiber} proves
that if $F$ is furthermore a pseudofunctor, then these terminal
objects are preserved by change-of-slice functors.  These are the two
key properties of lax slices required for the construction of a
reverse lax functor in \cref{sec:Quillen-A-bicat}.

Given an object $X$ of a bicategory $\C$, recall
\cref{constant-pseudofunctor} describes $\conof{X}$, the constant
pseudofunctor at $X$.

\begin{definition}\label{definition:lax-terminal}
  We say that $\lto \in \C$ is \emph{lax terminal} if there is a
  lax transformation $k \cn \Id_\C \to \conof{\lto}$.  Such a
  transformation has component 1-cells $k_X\cn X \to \lto$ for $X
  \in \C$ and 2-cells
  \[
  \begin{tikzpicture}[x=16mm,y=16mm]
    \draw[0cell] 
    (0,0) node (1) {\lto}
    (1,0) node (1') {\lto}
    (1') ++(90:1) node (y) {Y}
    (1) ++(90:1) node (x) {X}
    ;
    \draw[1cell] 
    (x) edge node {u} (y)
    (x) edge[swap] node {k_X} (1)
    (y) edge node {k_Y} (1')
    (1) edge[swap] node {1_{\lto}} (1')
    ;
    \draw[2cell] 
    (.55,.45) node[rotate=45,font=\Large] (R) {\Rightarrow}
    (R) node[above left] {k_u}
    ;
  \end{tikzpicture}
  \]
  satisfying the lax unity and lax naturality axioms.
\end{definition}


\begin{definition}\label{definition:inc-lax-terminal}
  Given lax functors $F,G\cn \B \to \C$, we say that a lax
  transformation $k\cn F \to G$ is \emph{inc-lax} or
  \emph{\underline{in}itial-\underline{c}omponent-lax} if each
  component
  \[
  k_X\cn FX \to GX
  \]
  is initial in the category $\C(FX,GX)$.
\end{definition}

\begin{definition}
  Suppose that $\lto \in \C$ is a lax terminal object with lax
  transformation $k\cn \Id_\C \to \conof{\lto}$.
  We say $\lto$ is an \emph{inc-lax terminal} object
  if $k$ is inc-lax and the component $k_\lto$ at $\lto$ is
  the identity 1-cell $1_\lto$.
\end{definition}

\begin{explanation}
  The universal property of initial 1-cells implies that, for a 1-cell
  $u\cn X \to Y$, the lax naturality constraint $k_u$ is equal to the
  composite of the left unitor with the universal 2-cell from each
  $k_X$ to the composite $k_Y \, u$, as shown below.
  \[
  \begin{tikzpicture}[x=16mm,y=16mm]
    \newcommand{\boundary}{
      \draw[0cell] 
      (0,0) node (1) {\lto}
      (1,0) node (1') {\lto}
      (1') ++(90:1) node (y) {Y}
      (1) ++(90:1) node (x) {X}
      ;
      \draw[1cell] 
      (x) edge node {u} (y)
      (x) edge[swap] node {k_X} (1)
      (y) edge node {k_Y} (1')
      (1) edge[swap] node {1_{\lto}} (1')
      ;
    }
    \draw (1.75,.5) node[font=\Large] {=};
    \begin{scope}
      \boundary
      \draw[2cell] 
      (.55,.45) node[rotate=45,font=\Large] (R) {\Rightarrow}
      (R) node[above left] {k_u}
      ;
    \end{scope}
    \begin{scope}[shift={(2.5,0)}]
      \boundary
      \draw[1cell] 
      (x) edge[swap] node[pos=.25,inner sep=0] {k_X} (1')
      ;
      \draw[2cell] 
      (.4,.3) node[rotate=45,font=\Large] (R) {\Rightarrow}
      (R) node[below right] {\ell}
      (.75,.65) node[rotate=45,font=\Large] (E) {\Rightarrow}
      (E) ++(.075,-.16) node {\exists !}
      ;
    \end{scope}
  \end{tikzpicture}
  \]
\end{explanation}

\begin{definition}\label{definition:preserves-inc}
  Suppose that $\B$ and $\C$ have inc-lax terminal objects $(\lto,k)$
  and $(\lto',k')$, respectively.
  We say that a lax functor $F \cn \B \to \C$ \emph{preserves initial
    components} if each composite
  \[
  FX \fto{Fk_X} F\lto \fto{k'_{(F\lto)}} \lto'
  \]
  is initial in $\C(FX,\lto)$.
\end{definition}

\begin{lemma}\label{lemma:preserves-initial-1-cells}
  Suppose that $F\cn \B \to \C$ preserves initial components.  If
  \[
  f\cn X \to \lto
  \]
  is any initial 1-cell in
  $\B(X,\lto)$, then the composite
  \[
  FX \fto{Ff} F\lto \fto{k'_{(F\lto)}} \lto'
  \]
  is initial in $\C(FX,\lto')$.
\end{lemma}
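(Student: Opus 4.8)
The plan is to exploit that, by the definition of inc-lax terminal object, the component $k_X\cn X \to \lto$ is itself initial in the hom-category $\B(X,\lto)$. Thus both $f$ and $k_X$ are initial objects of the same category $\B(X,\lto)$, and since initial objects are unique up to a unique isomorphism, there is an invertible $2$-cell $\beta\cn f \to k_X$ in $\B(X,\lto)$.

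Next I would transport this isomorphism first through $F$ and then through whiskering by $k'_{(F\lto)}$. The local functor $F\cn \B(X,\lto) \to \C(FX,F\lto)$ carries $\beta$ to a $2$-cell $F\beta\cn Ff \to Fk_X$, which is again invertible because functors preserve isomorphisms. Post-composition with the $1$-cell $k'_{(F\lto)}$ is the functor
\[
c\bigl(k'_{(F\lto)}, -\bigr)\cn \C(FX,F\lto) \to \C(FX,\lto'),
\]
which likewise preserves isomorphisms; applying it to $F\beta$ produces an invertible $2$-cell
\[
1_{k'_{(F\lto)}} * F\beta \cn k'_{(F\lto)} \circ Ff \to k'_{(F\lto)} \circ Fk_X
\]
in $\C(FX,\lto')$.

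Finally, the hypothesis that $F$ preserves initial components says precisely that the target $k'_{(F\lto)} \circ Fk_X$ is initial in $\C(FX,\lto')$. Since any object isomorphic to an initial object is itself initial, the source $k'_{(F\lto)} \circ Ff$ is initial in $\C(FX,\lto')$, which is the desired conclusion.

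I expect there to be no genuine obstacle here: the whole argument is the elementary categorical fact that initiality is invariant under isomorphism, applied once the two $1$-cells $f$ and $k_X$ are recognized as initial objects of the same hom-category. The only care required is the bookkeeping that whiskering by $k'_{(F\lto)}$ really is a functor between the relevant hom-categories, so that it sends the isomorphism $F\beta$ to an isomorphism whose target is the known initial $1$-cell.
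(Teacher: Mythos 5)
Your proposal is correct and is essentially the paper's own argument: both use the unique isomorphism $f \iso k_X$ between initial objects of $\B(X,\lto)$, push it forward through $F$ and whiskering with $k'_{(F\lto)}$, and conclude by initiality of $(k'_{(F\lto)})\circ(Fk_X)$ together with the fact that initiality is invariant under isomorphism. The paper's proof is just a terser version of the same steps, so there is nothing to add.
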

\begin{proof}
  If $f$ is initial, then there is a unique isomorphism $f \iso
  k_X$.  Therefore $Ff \iso Fk_X$ and hence their composites
  with $k'_{(F\lto)}$ are isomorphic.  Now
  \[
  (k'_{(F\lto)}) \circ (Fk_X)
  \]
  is inital by hypothesis, and therefore the result follows.
\end{proof}

Now we show that, if $F$ is essentially surjective, essentially full,
and fully faithful, then each lax slice $F\dar X$ has an inc-lax
terminal object, and each change-of-slice functor $F \dar u$ preserves initial
components.  The first of these results requires the axiom of choice,
and the second depends on the first.

\begin{proposition}\label{proposition:lax-slice-lax-terminal}
  Suppose $F$ is a lax functor which is essentially surjective,
  essentially full, and fully faithful.  Then for each $X \in \C$ the
  lax slice $F \dar X$ has an inc-lax terminal object.
\end{proposition}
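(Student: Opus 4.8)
The plan is to construct the inc-lax terminal object directly: choose the underlying object $T \in F \dar X$, then choose an initial component $k_{(A,f_A)} \cn (A,f_A) \to T$ for every object, and finally assemble these into a lax transformation $k \cn \Id_{F \dar X} \to \conof{T}$ by means of the universal property of initial $1$-cells. Since $F$ is essentially full and fully faithful (\cref{definition:es-ef-ff}), each local functor $F \cn \B(A,A') \to \C(FA,FA')$ is an equivalence of categories, and I use this repeatedly.

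For the underlying object, I would use essential surjectivity to pick $A_T \in \B$ and an adjoint equivalence $f_T \cn FA_T \to X$ (\cref{definition:internal-equivalence}), setting $T = (A_T, f_T)$; write $f_T^\bdot$ for an adjoint, with counit $\epz \cn f_T f_T^\bdot \iso 1_X$. For each object $(A,f_A)$ the $1$-cell $f_T^\bdot f_A \cn FA \to FA_T$ lies in $\C(FA, FA_T)$, so essential fullness yields $p_A \cn A \to A_T$ in $\B$ together with an isomorphism $Fp_A \iso f_T^\bdot f_A$. Whiskering by $f_T$ and pasting with $\epz$ and the unitors gives an isomorphism $\theta_A \cn f_A \iso f_T(Fp_A)$, and I define $k_{(A,f_A)} = (p_A, \theta_A)$. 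These choices are the only appeal to the axiom of choice.

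The crux is that any $1$-cell $(p,\theta) \cn (A,f_A) \to T$ with invertible $2$-cell aspect $\theta$ is initial in $(F \dar X)\big((A,f_A), T\big)$. A $2$-cell $(p,\theta) \to (q,\psi)$ is a $2$-cell $\al \cn p \to q$ in $\B$ subject to the ice cream cone condition $(1_{f_T} * F\al) \circ \theta = \psi$; as $\theta$ is invertible this says $1_{f_T} * F\al = \psi \circ \theta^\inv$. Because $f_T$ is an equivalence, post-composition $1_{f_T} * (-)$ is an equivalence of hom-categories and hence a bijection on the relevant $2$-cells, and $F$ is a bijection on $2$-cells by hypothesis; so there is exactly one such $\al$. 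Thus each $k_{(A,f_A)}$ is initial and $k$ will be inc-lax in the sense of \cref{definition:inc-lax-terminal}. To meet the normalization $k_T = 1_T$, recall that the identity $1$-cell of $T$ is $(1_{A_T}, r')$ with $r' = (1_{f_T} * F^0_{A_T}) \circ r^\inv$ as in \eqref{slice-1-1}; this $r'$ is invertible because $f_T$ is an equivalence and $F^0_{A_T}$ is invertible, so $1_T$ itself has invertible $2$-cell aspect and is therefore initial, allowing me to set $k_T = 1_T$.

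It remains to upgrade the family $\big(k_{(A,f_A)}\big)$ to a lax transformation and verify its axioms, which is where the genuine work lies. For a $1$-cell $u \cn (A,f_A) \to (A',f_{A'})$ of $F \dar X$ the constant pseudofunctor (\cref{constant-pseudofunctor}) sends $u$ to $1_T$, so the lax naturality constraint must be a $2$-cell $k_u \cn 1_T \circ k_{(A,f_A)} \to k_{(A',f_{A'})} \circ u$; I would define it as the left unitor $1_T \circ k_{(A,f_A)} \to k_{(A,f_A)}$ followed by the unique $2$-cell $k_{(A,f_A)} \to k_{(A',f_{A'})} \circ u$ supplied by initiality, exactly as in the explanation following \cref{definition:inc-lax-terminal}. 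The lax unity and lax naturality axioms for $k$, as well as naturality of $k_{(-)}$ in $2$-cells, then reduce to equalities of $2$-cells whose common source is a unitor-composite of the initial $1$-cell $k_{(A,f_A)}$; any two such $2$-cells with the same target coincide, so the axioms hold automatically. I expect the main obstacle to be the pasting-diagram bookkeeping that confirms the two sides of each axiom really do share this initial source and target, together with the verification that $r'$ (equivalently $F^0$) is invertible---immediate when $F$ is a pseudofunctor, but requiring a separate check that an essentially full, fully faithful lax functor is unitary in general.
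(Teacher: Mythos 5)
Your proposal is correct and follows essentially the same route as the paper's proof: the same choice of terminal object via essential surjectivity and an adjoint equivalence, the same components $(p_A,\theta_A)$ via essential fullness (your whiskering-with-the-counit construction of $\theta_A$ is exactly the paper's mate of $\theta_A^\dagger$), the same initiality argument (full faithfulness of post-whiskering by the adjoint equivalence combined with full faithfulness of $F$ on 2-cells), and the same appeal to the universal property of initial 1-cells to assemble the components into a lax transformation and get its axioms for free.

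The caveat you flag at the end is the only substantive difference, and you are right to flag it. Since the identity 1-cell of $T=(A_T,f_T)$ is $(1_{A_T},r')$ with $r' = (1_{f_T}*F^0_{A_T})\circ r^\inv$ as in \eqref{slice-1-1}, your key lemma shows that $1_T$ is initial precisely when $r'$ is invertible, equivalently when $F^0_{A_T}$ is invertible; so the normalization $k_T=1_T$ genuinely requires this. Note that the paper's proof is silent on exactly this point: it composes with $\theta_A^\inv$ uniformly, which at the overridden identity component is $(r')^\inv$. The separate check you ask for does hold, so your argument can be completed without strengthening the hypotheses: an essentially full, fully faithful lax functor has invertible $F^0$. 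One way to see it is to use the fact that the local functor $\B(A_T,A_T)\to\C(FA_T,FA_T)$ is an equivalence to transport the target's horizontal composition across it, reducing to the case of two monoidal structures $(\otimes,j)$ and $(\otimes',i)$ on one category with lax constraints $\psi\cn \otimes'\Rightarrow\otimes$ and $\psi^0\cn i \to j$ on the identity functor. Then $\sigma := \ell_i\circ\psi_{j,i}\circ(r'_j)^\inv\cn j\to i$ is a two-sided inverse of $\psi^0$: the identity $\sigma\circ\psi^0 = 1_i$ follows from the lax left unity axiom at $i$, naturality of $r'$, and $\ell'_i=r'_i$; the identity $\psi^0\circ\sigma = 1_j$ follows from naturality of $\ell$ and of $\psi$, the lax right unity axiom at $j$, and $\ell_j=r_j$ (\cref{bicat-l-equals-r}). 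Alternatively---and this is all that is needed for \cref{theorem:whitehead-bicat}---one may simply record the proposition under the additional hypothesis that $F$ is unitary, which is automatic for pseudofunctors.
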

\begin{proof}
  Since $F$ is essentially surjective on objects, there is a choice of
  object $\ol{X} \in \B$ and invertible 1-cell
  \[
  f_{\ol{X}}\cn F\ol{X} \to X
  \]
  with adjoint inverse
  \[
  f_{\ol{X}}^\bdot\cn X \to F\ol{X}.
  \]
  Therefore
  $(\ol{X},f_{\ol{X}})$ is an object of $F \dar X$; we will show that
  it is an inc-lax terminal object.  Given any other object $(A, f_A)$ in $F \dar X$, we
  have a composite
  \[
    FA \fto{f_A} X \fto{f_{\ol{X}}^{\bdot}} F\ol{X}
  \]
  in $\C$.  Since $F$ is essentially surjective on 1-cells, there is a
  choice of 1-cell $p_A$ together with a 2-cell isomorphism
  \[
  \theta^{\dagger}_A\cn f_{\ol{X}}^\bdot \; f_A \to Fp_A
  \]
  whose mate $\theta_A$ fills the triangle
\[
  \begin{tikzpicture}[x=20mm,y=20mm]
    \draw[0cell] 
    (0,0) node (x) {X}
    (120:1) node (0) {FA}
    (60:1) node (1) {F\ol{X}}
    ;
    \draw[1cell] 
    (0) edge[swap] node {f_A} (x)
    (1) edge node {f_{\ol{X}}} (x)
    (0) edge node {Fp_A} (1) 
    ;
    \draw[2cell] 
    (.05,.5) node[rotate=30,font=\Large] {\Rightarrow}
    node[above left] {\theta_A}
    ;
  \end{tikzpicture}
\]
Note that $\theta_A$ is therefore also an isomorphism
by \cref{lemma:mate-iso}.  If $(A,f_A)$ is equal to the object
$(\ol{X},f_{\ol{X}})$, then we require the choice of
$(p_{\ol{X}},\theta_{\ol{X}})$ to be the identity 1-cell $(1_{\ol{X}},r')$
described in \eqref{slice-1-1}.

Therefore $(p_A,\theta_A)$ defines a 1-cell $(A,f_A) \to
(\ol{X},f_{\ol{X}})$ in $F \dar X$ which is the identity 1-cell if
$(A,f_A) = (\ol{X},f_{\ol{X}})$.  Now we show that $(p_A,\theta_A)$ is
initial in the category of 1- and 2-cells $(A,f_A) \to
(X,f_{\ol{X}})$.  The universal property for initial 1-cells then
implies that the components defined by
$k_{(A,f_A)} = (p_A,\theta_A)$ assemble to form a lax
transformation to the constant pseudofunctor at $(\ol{X},f_{\ol{X}})$.

Given any other 1-cell $(q,\omega)\cn (A,f_A) \to
(\ol{X},f_{\ol{X}})$, we compose with $\theta_A^\inv$ to obtain a 2-cell
\[
{\ga'}\cn f_{\ol{X}} \; (Fp_A) \to f_{\ol{X}} \; (Fq)
\]
shown below.
\[
  \begin{tikzpicture}[x=25mm,y=25mm]
    \draw[0cell] 
    (0,0) node (x) {X}
    (120:1) node (0) {FA}
    (60:1) node (1) {F\ol{X}}
    (180:1) node (2) {F\ol{X}}
    ;
    \path[1cell] 
    (0) edge node[pos=.75] {f_A} (x)
    (1) edge node {f_{\ol{X}}} (x)
    (0) edge node (Fpa) {Fq} (1)
    (0) edge[swap] node {Fp_{A}} (2) 
    (2) edge[swap] node {f_{\ol{X}}} (x)
    ;
    \draw[2cell] 
    (.05,.6) node[rotate=30,font=\Large] {\Rightarrow}
    node[above left] {\om}
    (150:.5) node[rotate=30,font=\Large] {\Rightarrow}
    node[above left] {\theta_A^\inv}

    ;
  \end{tikzpicture}
\]

Since $f_{\ol{X}}$ is an adjoint equivalence, this uniquely determines
a 2-cell
\[
\ga\cn Fp_A \to Fq
\]
such that $1_{f_{\ol{X}}} * \ga = \om\, \tha_A^\inv$.
Therefore, because $F$ is fully
faithful on 2-cells, we have a unique 2-cell
\[
\ol{\ga} \cn p_A \to q
\]
such that $F\ol{\ga} = \ga$ and hence satisfies the ice cream cone condition shown
below.
\begin{equation*}
  \begin{tikzpicture}[x=30mm,y=35mm,baseline={(0,-1.25)},scale=.9]
    \def\w{1.7} 
    \begin{scope}[shift={(-\w,0)}]
    \draw[0cell] 
    (0,0) node (x) {X}
    (120:1) node (0) {FA}
    (60:1) node (1) {F\ol{X}}
    ;
    \draw[1cell] 
    (0) edge[swap] node {f_A} (x)
    (1) edge node {f_{\ol{X}}} (x)
    (0) edge[bend right,swap] node (Fpa) {Fp_A} (1)
    (0) edge[bend left] node (Fq) {Fq} (1)
    ;
    \draw[2cell] 
    (.05,.35) node[rotate=30,font=\Large] {\Rightarrow}
    node[above left] {\theta_A}
    node[between=Fpa and Fq at .5, shift={(.05,0)}, rotate=90, font=\Large] (E) {\Rightarrow}
    (E) node[left] {F\ol{\ga}}
    ;
    \end{scope}
    \begin{scope}[shift={(-.75,.2)},scale=.75]
      \draw[font=\LARGE]
      (1,0) ++(1.4,.5) node[rotate=0] {=}
      (1,0) ++(-1.4,.5) node[rotate=0] {=}
      ;
      \draw[0cell]
      (0,0) node (Fa) {FA}
      (1,0) node (Fxt) {F\ol{X}}
      (2,0) node (x) {X}
      (1,1.25) node (Fxt2) {F\ol{X}}
      ;
      \draw[1cell]
      (Fa) edge[bend right=60, looseness=1.15] node (faB) {f_A} (x) 

      (Fa) edge[swap] node (Fpa) {Fp_A} (Fxt)
      (Fxt) edge node (fxt) {f_{\ol{X}}} (x)

      (Fa) edge[bend left=60, looseness=1.25] node (fa) {f_A} (x) 
      (Fa) edge[bend left] node (Fq) {Fq} (Fxt2)
      (Fxt2) edge[bend left] node (fxt2) {f_{\ol{X}}} (x)
      ;
      \draw[2cell]
      node[between=faB and Fxt at .5, rotate=90, font=\Large] (Ta) {\Rightarrow}
      (Ta) node[left] {\theta_A}
      node[between=Fxt and fa at .5, rotate=90, font=\Large] (Tainv) {\Rightarrow}
      (Tainv) node[right] {\theta_A^\inv}
      node[between=fa and Fxt2 at .5, rotate=90, font=\Large] (Om) {\Rightarrow}
      (Om) node[right] {\omega}
      ;
    \end{scope}
    \begin{scope}[shift={(\w,0)}]
      \draw[0cell] 
      (0,0) node (x) {X}
      (120:1) node (0) {FA}
      (60:1) node (1) {F\ol{X}}
      ;
      \draw[1cell] 
      (0) edge[swap] node {f_A} (x)
      (1) edge node {f_{\ol{X}}} (x)
      (0) edge[bend left] node (Fq) {Fq} (1)
      ;
      \draw[2cell] 
      (.05,.55) node[rotate=30,font=\Large] {\Rightarrow}
      node[above left] {\om}
      ;
    \end{scope}
  \end{tikzpicture}
\end{equation*}
Therefore $(\ol{\ga})$ is a 2-cell in $F \dar X$ from
$(p_A,\theta_A)$ to $(q,\om)$.  The diagram above, together with
the invertibility of $\theta_A$ and the uniqueness of both $\ga$ and
$\ol{\ga}$ implies that $(\ol{\ga})$ is the unique such 2-cell in $F
\dar X$.
\end{proof}

\begin{proposition}\label{lemma:lax-slice-change-fiber}
  Suppose $F$ is a pseudofunctor which is essentially surjective,
  essentially full, and fully faithful.  Then for each 1-cell
  $u\cn X \to Y$ in $\C$, the strict functor $F \dar u$ preserves
  initial components.
\end{proposition}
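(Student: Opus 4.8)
The plan is to verify \cref{definition:preserves-inc} directly: I will show that the composite 1-cell it refers to has an invertible 2-cell aspect, and that invertibility of the aspect alone forces initiality. By \cref{proposition:lax-slice-lax-terminal} the inc-lax terminal objects of $F\dar X$ and $F\dar Y$ are $(\ol{X},f_{\ol{X}})$ and $(\ol{Y},f_{\ol{Y}})$, and for every object the chosen initial 1-cell into the terminal has an \emph{invertible} 2-cell aspect. Since the change-of-slice functor acts by $(A,f_A)\mapsto (A,uf_A)$, the composite appearing in \cref{definition:preserves-inc} is
\[
(A, uf_A) \fto{(F\dar u)(p_A,\theta_A)} (\ol{X}, u f_{\ol{X}}) \fto{(q,\phi)} (\ol{Y}, f_{\ol{Y}}),
\]
where $(p_A,\theta_A)=k_{(A,f_A)}$ is the chosen initial 1-cell in $F\dar X$ and $(q,\phi)$ is the chosen initial 1-cell in $F\dar Y$ at $(\ol{X},uf_{\ol{X}})$; in particular $\theta_A$ and $\phi$ are invertible.

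First I would extract a local criterion from the proof of \cref{proposition:lax-slice-lax-terminal}: a 1-cell $(r,\psi)\cn (A,g)\to(\ol{Y},f_{\ol{Y}})$ is initial in its hom-category whenever $\psi$ is invertible. That proof uses the target 1-cell only through invertibility of its aspect, together with the facts that $f_{\ol{Y}}$ is an adjoint equivalence and that $F$ is fully faithful; given any competitor $(s,\chi)$, one transports $\chi\circ\psi^\inv$ across the equivalence induced by whiskering with $f_{\ol{Y}}$ and then across the fully faithful local functor of $F$ to produce the unique comparison 2-cell satisfying the ice cream cone condition. Thus the criterion is not special to the chosen components and applies to any 1-cell into $(\ol{Y},f_{\ol{Y}})$ with invertible aspect.

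It then remains to check that the displayed composite has invertible 2-cell aspect. By \cref{lemma:base-change-functor} the aspect of $(F\dar u)(p_A,\theta_A)$ is $a_\C^\inv\circ(1_u*\theta_A)$, which is invertible because $\theta_A$ is, and $\phi$ is invertible by the first paragraph. By the horizontal composition formula \eqref{slice-2-1}, the aspect of the composite is assembled from these two isomorphisms by whiskering and associators together with the constraint $F^2_{q,p_A}$. This is the one place the pseudofunctor hypothesis is needed, and the crux of the argument: because $F$ is a pseudofunctor, $F^2$ is invertible, so the composite aspect is a composite of isomorphisms and hence invertible. The local criterion then identifies the composite as initial, which is exactly the assertion that $F\dar u$ preserves initial components. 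For a merely lax $F$ the factor $F^2$ need not be invertible, so this occurrence of $F^2$ is precisely the obstacle that makes pseudofunctoriality indispensable.
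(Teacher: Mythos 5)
Your proposal is correct and follows essentially the same route as the paper: identify the 2-cell aspect of the composite as a pasting of $\theta_A$, the aspect of the second initial 1-cell, and $F^2$; invoke invertibility of the first two (from \cref{proposition:lax-slice-lax-terminal}) and of $F^2$ (pseudofunctor hypothesis); and conclude initiality by the composition-with-the-inverse argument from the proof of \cref{proposition:lax-slice-lax-terminal}. Your explicit isolation of the ``local criterion'' (any 1-cell into $(\ol{Y},f_{\ol{Y}})$ with invertible aspect is initial, using only the adjoint equivalence $f_{\ol{Y}}$ and fully faithfulness of $F$) is exactly what the paper invokes implicitly with its phrase ``as in the proof of \cref{proposition:lax-slice-lax-terminal}.''
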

\begin{proof}
  For $(A, f_A) \in F \dar X$, let $(p_A, \theta_A)$ denote the
  initial 1-cell from $(A,f_A)$ to the inc-lax terminal object
  \[
  (\ol{X},f_{\ol{X}}) \in F \dar X.
  \]
  Let $(\ol{u},\theta_{\ol{u}})$ denote the initial 1-cell from
  \[
  (F \dar u)(\ol{X}, f_{\ol{X}}) = (\ol{X},u f_{\ol{X}})
  \]
  to the inc-lax terminal object
  \[
  (\ol{Y},f_{\ol{Y}}) \in F \dar Y.
  \]
  We must show that the composite of $(\ol{u},\theta_{\ol{u}})$ with $(F \dar
  u)(p_A,\theta_A)$ is initial.  This composite is given by $(\ol{u}
  p_A, {\theta'})$, where ${\theta'}$ is the 2-cell determined by the
  pasting diagram below.
  \begin{equation}\label{base-change-thetabar}
    \begin{tikzpicture}[x=15mm,y=20mm,baseline=(xt.base), scale=.85]
      \draw[0cell] 
      (0,0) node (y) {Y}
      (-.8,.8) node (x) {X}
      (-2,2) node (a) {FA}
      (.15,1.75) node (xt) {F\ol{X}}
      (2,2) node (yt) {F\ol{Y}}
      ;
      \draw[1cell] 
      (a) edge[swap] node (f0) {f_{A}} (x)
      (xt) edge node (f1) {f_{\ol{X}}} (x)
      (x) edge[swap] node {u} (y)
      (yt) edge node (f2) {f_{\ol{Y}}} (y) 
      (a) edge[bend right=10] node (b0) {Fp_A} (xt) 
      (xt) edge[bend right=10] node[pos=.75] (b1) {F\ol{u}} (yt)
      (a) edge[bend left] node (T) {F(\ol{u}\,p_A)} (yt)
      ;
      \draw[2cell] 
      node[between=f0 and xt at .4, rotate=45, font=\Large] (A) {\Rightarrow}
      (A) node[below right] {\theta_A}
      node[between=f2 and xt at .55, rotate=45, font=\Large] (B) {\Rightarrow}
      (B) node[below right] {\theta_{\ol{u}}}
      node[between=xt and T at .5, shift={(-.1,0)}, rotate=90, font=\Large] (C) {\Rightarrow}
      (C) node[right] {F^2_{p_1, p_0}}
      ;
    \end{tikzpicture}
  \end{equation}
  
  The argument in \cref{proposition:lax-slice-lax-terminal} shows that
  $\theta_A$ and $\theta_{\ol{u}}$ are isomorphisms.  Since
  $F$ is a pseudofunctor by hypothesis, the 2-cells $F^2$
  are isomorphisms and hence ${\theta'}$ is an isomorphism.  Then, as
  in the proof of \cref{proposition:lax-slice-lax-terminal},
  composition with the inverse of ${\theta'}$ shows that
  $(\ol{u}\, p_A, {\theta'})$ is initial.
\end{proof}

\section{Quillen Theorem A for Bicategories}\label{sec:Quillen-A-bicat}

In this section we explain how to construct a reverse lax functor $G$.
We assume only that $F$ is lax functor, that its lax slices are
equipped with inc-lax terminal objects, and that these are preserved
by change-of-slice.  The end of \cref{sec:lax-terminal} explains how,
with the axiom of choice, one can choose such data when $F$ is an
essentially surjective, essentially full, and fully faithful
pseudofunctor.  However, if one has a constructive method for
obtaining these data in practice, then \cref{theorem:Quillen-A-bicat}
gives a construction of $G$ which does not depend on choice.  In
\cref{sec:Whitehead-bicat} we show that, under the hypotheses of the
Bicategorical Whitehead Theorem \ref{theorem:whitehead-bicat}, the $G$
constructed here is an inverse biequivalence for $F$.

\begin{theorem}[Bicategorical Quillen Theorem A]\label{theorem:Quillen-A-bicat}
  Suppose $F\cn \B \to \C$ is a lax functor of bicategories and
  suppose the following:
  \begin{enumerate}
  \item\label{QA-hypothesis-1} For each $X \in \C$, the lax slice
    bicategory $F\dar X$ has an inc-lax terminal object
    $(\ol{X},f_{\ol{X}})$.
    Let $k^X$ denote the inc-lax transformation $\Id_{F\dar X} \to
    \conof{{(\ol{X},f_{\ol{X}})}}$.
  \item\label{QA-hypothesis-2} For each $u\cn X \to Y$ in $\C$, the
    induced functor $F\dar u$
    preserves initial components (\cref{definition:preserves-inc}).
  \end{enumerate}
  Then there is a lax functor $G \cn \C \to \B$ together with
  lax transformations
  \[
  \eta\cn \Id_\B \to GF \quad \mathrm{\ and\ } \quad \epz\cn FG \to \Id_\C.
  \]
\end{theorem}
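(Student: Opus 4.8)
The plan is to build $G$ from the $\B$-components of the inc-lax terminal data in the slices, obtaining every structure $2$-cell as the unique comparison that initiality forces. On objects I set $GX = \ol{X}$. For a $1$-cell $u\cn X \to Y$ I apply the change-of-slice functor to the terminal object of $F \dar X$, giving $(F \dar u)(\ol{X}, f_{\ol{X}}) = (\ol{X}, u f_{\ol{X}}) \in F \dar Y$, and I take the initial $1$-cell
\[
k^Y_{(\ol{X}, u f_{\ol{X}})} = (\ol{u}, \theta_{\ol{u}}) \cn (\ol{X}, u f_{\ol{X}}) \to (\ol{Y}, f_{\ol{Y}}).
\]
I define $Gu = \ol{u}$ and record its $2$-cell part $\theta_{\ol{u}}\cn u f_{\ol{X}} \to f_{\ol{Y}}(F\ol{u})$. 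The counit is then immediate: set $\epz_X = f_{\ol{X}}$ with lax naturality constraint $\epz_u = \theta_{\ol{u}}$. For the unit I take $\eta_A\cn A \to \ol{FA} = GFA$ to be the $\B$-part of the initial $1$-cell $k^{FA}_{(A, 1_{FA})}$ in the slice $F \dar FA$.

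The guiding principle for the remaining data is that each $k^X_{(A,f_A)}$ is \emph{initial} in its hom-category, so any parallel $1$-cell with the same source receives a \emph{unique} $2$-cell from it. I define $G$ on a $2$-cell $\phi\cn u \to v$ by whiskering $\phi$ with $f_{\ol{X}}$ to get a $1$-cell $(\ol{X}, u f_{\ol{X}}) \to (\ol{X}, v f_{\ol{X}})$ in $F \dar Y$; the lax naturality constraint of $k^Y$ at this $1$-cell is forced and unique by initiality, and I declare its $\B$-part to be $G\phi$. The constraints $G^0$ and $G^2$ are defined the same way. For $G^2_{w,u}\cn (Gw)(Gu) \to G(wu)$ I invoke hypothesis~\ref{QA-hypothesis-2}: by \cref{lemma:preserves-initial-1-cells}, applying $F \dar w$ to $(\ol{u},\theta_{\ol{u}})$ and postcomposing with $k^Z_{(\ol{Y}, w f_{\ol{Y}})} = (\ol{w},\theta_{\ol{w}})$ produces an \emph{initial} $1$-cell from $(\ol{X}, w(u f_{\ol{X}}))$ to $(\ol{Z}, f_{\ol{Z}})$ whose $\B$-part is $(Gw)(Gu)$; comparing it, across the associator of $\C$, with the initial $1$-cell $k^Z_{(\ol{X}, (wu) f_{\ol{X}})}$ whose $\B$-part is $G(wu)$ yields a unique $2$-cell, and $G^2_{w,u}$ is its $\B$-part. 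Likewise $G^0_X$ is the unique comparison of $1_{\ol{X}}$—which is the initial $1$-cell at $(\ol{X}, f_{\ol{X}})$ since $k_{\ol{X}} = 1_{\ol{X}}$ by inc-lax terminality—with $G 1_X$, across the unitor $1_X f_{\ol{X}} \iso f_{\ol{X}}$.

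With these definitions in hand, verifying that $G$ is a lax functor and that $\eta,\epz$ are lax transformations is routine \emph{by uniqueness}: each axiom equates two $2$-cells emanating from a common initial $1$-cell of some slice into a common target, so the two sides coincide. Functoriality of $G$ on $2$-cells and naturality of $G^0, G^2$ follow this way, as do the lax unity and lax naturality axioms for $\epz$, which are engineered into the definitions of $G^0$ and $G^2$. The unit $\eta$ is handled symmetrically: its naturality constraint $\eta_p\cn (GFp)\eta_A \to \eta_B\, p$ for $p\cn A \to B$ is the unique $2$-cell supplied by hypothesis~\ref{QA-hypothesis-2} applied to $F \dar Fp$, which exhibits $(GFp)\eta_A$ as the $\B$-part of an initial $1$-cell in $F \dar FB$.

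I expect the main obstacle to be the bookkeeping of associators and unitors that relate the slice objects $(\ol{X}, w(u f_{\ol{X}}))$ and $(\ol{X}, (wu) f_{\ol{X}})$—and their analogues for $\eta$—when establishing the lax associativity pentagon for $G$ and the naturality of $\eta$. This is precisely where hypothesis~\ref{QA-hypothesis-2} is indispensable: without it the composites $(Gw)(Gu)$ and $(GFp)\eta_A$ need not be initial, and the uniqueness argument that settles every coherence equation would break down.
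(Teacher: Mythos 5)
Your proposal is correct and takes essentially the same route as the paper: $G$ is assembled from the inc-lax terminal data ($GX = \ol{X}$, $Gu = \ol{u}$), every structure 2-cell ($G^0$, $G^2$, $G$ on 2-cells, and the constraints of $\eta$ and $\epz$) is obtained as the unique 2-cell out of an initial 1-cell in a lax slice, hypothesis \eqref{QA-hypothesis-2} is invoked exactly where the paper needs it (initiality of the composites underlying $G^2$ and $\eta_p$), and all axioms are settled by the same uniqueness principle. The only cosmetic difference is your definition of $G$ on a 2-cell $\phi$ via the lax naturality constraint of $k^Y$ at a whiskered 1-cell, which agrees, up to unitor bookkeeping, with the paper's direct comparison obtained by pasting $\phi$ onto $\theta_{\ol{u_1}}$ and using initiality of $(\ol{u_0},\theta_{\ol{u_0}})$.
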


The proof is structured as follows:
\begin{enumerate}
\item \label{it:G-1} \cref{definition:G}: define the data for $G =
  (G,G^2,G^0)$:
  \begin{enumerate}
  \item \label{it:G-1a} define $G$ as an assignment on 0-, 1-, and 2-cells;
  \item \label{it:G-1b} define the components of $G^0$ and $G^2$
  \end{enumerate}
\item \label{it:G-2} \cref{proposition:G-lax}: Show that $G$ defines a lax functor:
  \begin{enumerate}
  \item \label{it:G-2a} show that $G$ is functorial with respect to 2-cells;
  \item \label{it:G-2b} show that $G^2$ and $G^0$ are natural with respect to 2-cells;
  \item \label{it:G-2c} verify the lax associativity axiom \eqref{f2-bicat}
  \item \label{it:G-2d} verify the left and right unity axioms \eqref{f0-bicat}.
  \end{enumerate}
\item \label{it:G-3} Establish the existence of $\eta$ and $\epz$:
  \begin{enumerate}
  \item \label{it:G-3a} define the components of $\eta$ and $\epz$;
  \item \label{it:G-3b} verify the 2-cell components of $\eta$ and $\epz$ are natural
    with respect to 2-cells;
  \item \label{it:G-3c} verify the unity axiom \eqref{unity-transformation-pasting} for $\eta$
    and $\epz$;
  \item \label{it:G-3d} verify the horizontal naturality axiom
    \eqref{2-cell-transformation-pasting} for $\eta$ and $\epz$.
  \end{enumerate}
\end{enumerate}
\newcommand{\Gstep}[1]{\textbf{Step (\ref{it:G-#1}).}}
\newcommand{\Gsteps}[2]{\textbf{Steps (\ref{it:G-#1}) and (\ref{it:G-#2}).}}

\begin{definition}\label{definition:G}
  Suppose $F\cn\B \to \C$ is a lax functor satisfying the
  assumptions of \cref{theorem:Quillen-A-bicat}.

  \Gstep{1a}   We define an assignment on cells $G\cn\C \to \B$ as
  follows.
  \begin{itemize}
  \item For each object $X$ in $\C$, the slice $F \dar X$ has an inc-lax terminal
    object $(\ol{X},f_{\ol{X}})$.  Define $GX = \ol{X}$.
  \item For each 1-cell $u\cn X \to Y$ in $\C$, we have $(\ol{X},
    uf_{\ol{X}}) \in F \dar Y$, and inc-lax terminal object
    $(\ol{Y},f_{\ol{Y}}) \in F \dar Y$.
    The component of $k^Y$ at $(\ol{X},uf_{\ol{X}})$ is an initial 1-cell
    \[
      (\ol{u},\theta_{\ol{u}}) \cn (\ol{X}, uf_{\ol{X}}) \to (\ol{Y}, f_{\ol{Y}}).
    \]
    Define $Gu = \ol{u}$.
  \item Given a 2-cell $\ga\cn u_0 \to u_1$ in $\C$, we have 1-cells in
    $F\dar Y$ given by $(\ol{u_0},\theta_{0})$ and
    $(\ol{u_1},\theta_{1})$, the components of $k^Y$.
    Pasting the latter of these with $\ga$ yields a 1-cell $(\ol{u_1},
    \theta_1(\ga * 1_{f_{\ol{X}}}))$ shown in the pasting diagram below.
    \begin{equation}\label{Gdef-3}
      \begin{tikzpicture}[x=15mm,y=15mm,baseline={(x.base)}]
        \draw[0cell] 
        (0,0) node (y) {Y}
        (120:2) node (a) {F\ol{X}}
        (120:1) node (x) {X}
        (60:2) node (b) {F\ol{Y}}
        ;
        \draw[1cell] 
        (a) edge[swap] node {f_{\ol{X}}} (x)
        (x) edge[bend left] node {u_1} (y) 
        (x) edge[swap, bend right=40] node {u_0} (y) 
        (b) edge node {f_{\ol{Y}}} (y)
        (a) edge node (T) {F\ol{u_1}} (b)
        ;
        \draw[2cell] 
         node[between=y and T at .65, rotate=30, font=\Large] (A) {\Rightarrow}
         (A) node[below right] {\theta_{1}}
         node[between=x and y at .55, rotate=30, font=\Large] (B) {\Rightarrow}
         (B) node[above left] {\ga}
        ;
      \end{tikzpicture}
    \end{equation}
    Since $(\ol{u_0},\theta_0)$ is initial by construction and
    \[
    (\ol{u_1},\theta_1(\ga * 1_{f_{\ol{X}}}))
    \]
    is another 1-cell in $F \dar Y$ with source
    $(\ol{X},u_0f_{\ol{X}})$ and target $(\ol{Y},f_{\ol{Y}})$, there
    is a unique 2-cell $(\ol{\ga})$ in $F \dar Y$ such that
    $F\ol{\ga}$ satisfies the ice cream cone condition shown below.
    \begin{equation}\label{Gdef-4}
    \begin{tikzpicture}[x=17mm,y=17mm,baseline={(eq.base)}]
      \def\w{1.75} 
      \def\h{-1.5} 
      \def\m{.5} 
      \draw[font=\Large] (\w/2+\m,0) node (eq) {=}; 
      \newcommand{\boundary}{
        \draw[0cell] 
        (0,-.25) node (y) {Y}
        (120:2) node (a) {F\ol{X}}
        node[between=a and y at .5] (x) {X}
        (60:2) node (b) {F\ol{Y}}
        ;
        \draw[1cell] 
        (a) edge[swap] node {f_{\ol{X}}} (x)
        (b) edge node (fy) {f_{\ol{Y}}} (y) 
        (x) edge[swap, bend right] node {u_0} (y)
        (a) edge[bend left] node (T) {F\ol{u_1}} (b) 
        ;
      }
      \begin{scope}[shift={(0,\h/2)}]
        \boundary
        \draw[1cell] 
        (a) edge[swap, bend right] node (B) {F\ol{u_0}} (b) 
        ;
        \draw[2cell] 
        node[between=y and B at .6, rotate=50, font=\Large] (D) {\Rightarrow}
        (D) node[left, shift={(-.05,.05)}] {\theta_{0}}
        node[between=a and b at .5, rotate=90, font=\Large] (C) {\Rightarrow}
        (C) node[right] {F\ol{\ga}}
        ;
      \end{scope}      
      \begin{scope}[shift={(\w+\m+\m,\h/2)}]
        \boundary
        \draw[1cell] 
        (x) edge[bend left] node {u_1} (y) 
        ;
        \draw[2cell] 
        node[between=y and T at .5, shift={(.05,.1)}, rotate=30, font=\Large] (BB) {\Rightarrow}
        (BB) node[below, shift={(.05,-.05)}] {\theta_{1}}
        node[between=x and y at .55, rotate=30, font=\Large] (CC) {\Rightarrow}
        (CC) node[above left] {\ga}
        ;
      \end{scope}
    \end{tikzpicture}
    \end{equation}
    Define $G\ga = \ol{\ga}$.
  \end{itemize}

  \Gstep{1b} Next we define the components of the lax constraints $G^0$ and $G^2$.
  \begin{itemize}
  \item Following the definition of $G$ for $Y = X$ and $u = 1_X$, we
    obtain a 1-cell
    \[
    G1_X = \ol{1_X}\cn \ol{X} \to \ol{X}
    \]
    together with $\theta_{\ol{1_X}}$ filling the triangle below.
    \begin{equation}\label{Gdef-7}
      \begin{tikzpicture}[x=15mm,y=13mm,baseline=(x.base)]
        \draw[0cell] 
        (0,0) node (y) {X.}
        (120:2) node (a) {F\ol{X}}
        (120:1) node (x) {X}
        (60:2) node (b) {F\ol{X}}
        ;
        \draw[1cell] 
        (a) edge[swap] node {f_{\ol{X}}} (x)
        (x) edge[swap] node {1_X} (y)
        (b) edge node {f_{\ol{X}}} (y)
        (a) edge node (T) {F\ol{1_X}} (b)
        ;
        \draw[2cell] 
        node[between=y and T at .65, rotate=30, font=\Large] (A) {\Rightarrow}
        (A) node[below right] {\theta_{\ol{1_X}}}
        ;
      \end{tikzpicture}
    \end{equation}
    Composing $\theta_{\ol{1_X}}$ with the left unitor $\ell$ we obtain
    a 1-cell in $F \dar X$
    \[
    (\ol{1_X}, \ell_{f_{\ol{X}}} \circ \theta_{\ol{1_X}}) \cn (\ol{X},
    f_{\ol{X}}) \to (\ol{X}, f_{\ol{X}}).
    \]
    By the unit condition for inc-lax terminal objects, the identity 1-cell for
    $(\ol{X},f_{\ol{X}})$ is initial and hence we have a unique 2-cell
    \[
    1_{GX} = 1_{\ol{X}} \to \ol{1_X} = G1_X
    \]
    satisfying the ice cream cone condition for
    \[
    (\ol{1_X}, \ell_{f_{\ol{X}}} \circ \theta_{\ol{1_X}}) \andspace
    (1_{\ol{X}}, r').
    \]
    We define $G^0_X$ to be this 2-cell.
    
  \item Given a pair of composable arrows $u\cn X \to Y$ and $v\cn Y \to Z$ in $\C$, we have
    initial 1-cells $(\ol{u},\theta_{\ol{u}})$ and $(\ol{v},
    \theta_{\ol{v}})$ shown below.
    \begin{equation}\label{Gdef-9}
    \begin{tikzpicture}[x=15mm,y=13mm,baseline={(x.base)}]
      \draw[0cell] 
      (0,0) node (y) {Y}
      (120:2) node (a) {F\ol{X}}
      (120:1) node (x) {X}
      (60:2) node (b) {F\ol{Y}}
      ;
      \draw[1cell] 
      (a) edge[swap] node {f_{\ol{X}}} (x)
      (x) edge[swap] node {u} (y)
      (b) edge node {f_{\ol{Y}}} (y)
      (a) edge node (T) {F\ol{u}} (b)
      ;
      \draw[2cell] 
      node[between=y and T at .65, rotate=30, font=\Large] (A) {\Rightarrow}
      (A) node[below right] {\theta_{\ol{u}}}
      ;
    \end{tikzpicture}
    \qquad \mathrm{ and } \qquad
    \begin{tikzpicture}[x=15mm,y=13mm,baseline={(x.base)}]
      \draw[0cell] 
      (0,0) node (y) {Z}
      (120:2) node (a) {F\ol{Y}}
      (120:1) node (x) {Y}
      (60:2) node (b) {F\ol{Z}}
      ;
      \draw[1cell] 
      (a) edge[swap] node {f_{\ol{Y}}} (x)
      (x) edge[swap] node {v} (y)
      (b) edge node {f_{\ol{Z}}} (y)
      (a) edge node (T) {F\ol{v}} (b)
      ;
      \draw[2cell] 
      node[between=y and T at .65, rotate=30, font=\Large] (A) {\Rightarrow}
      (A) node[below right] {\theta_{\ol{v}}}
      ;
    \end{tikzpicture}
    \end{equation}
    Pasting these together and composing with $F^2_{\ol{v},\ol{u}}$, we
    obtain a 1-cell in $F \dar Z$
    \[
    (\ol{v} \circ \ol{u}, \theta')\cn (\ol{X}, v (u f_{\ol{X}})) \to (\ol{Z}, f_{\ol{Z}}),
    \]
    where $\theta'$ is given by the following pasting diagram.
    \begin{equation}\label{Gdef-10}
      \begin{tikzpicture}[x=13mm,y=11mm,baseline={(x.base)}]
        \draw[0cell] 
        (-60:.25) node (y) {Y}
        (120:2.5) node (a) {F\ol{X}}
        (120:1.25) node (x) {X}
        (60:2) node (b) {F\ol{Y}}
        (-60:1.75) node (z) {Z}
        (z) ++(60:4.25) node (c) {F\ol{Z}}
        ;
        \draw[1cell] 
        (a) edge[swap] node {f_{\ol{X}}} (x)
        (x) edge[swap] node {u} (y)
        (b) edge node {f_{\ol{Y}}} (y)
        (a) edge[bend right] node (T) {F\ol{u}} (b)
        (y) edge[swap] node {v} (z)
        (b) edge[bend right] node {F\ol{v}} (c)
        (c) edge node {f_{\ol{Z}}} (z)
        (a) edge[bend left] node (TT) {F(\ol{v} \circ \ol{u})} (c)
        ;
        \draw[2cell] 
        node[between=y and T at .65, rotate=30, font=\Large] (A) {\Rightarrow}
        (A) node[below right] {\theta_{\ol{u}}}
        node[between=y and c at .4, shift={(0,-.1)}, rotate=30, font=\Large] (B) {\Rightarrow}
        (B) node[below right] {\theta_{\ol{v}}}
        node[between=b and TT at .5, shift={(-.05,0)}, rotate=90, font=\Large] (C) {\Rightarrow}
        (C) node[right] {F^2_{\ol{v},\ol{u}}}
        ;
      \end{tikzpicture}
    \end{equation}
    Now by definition, $(\ol{u},\theta_{\ol{u}}) = k^Y_{(\ol{X},uf_{\ol{X}})}$.    
    Therefore by hypothesis \eqref{QA-hypothesis-2} the composite
    $(\ol{v} \circ \ol{u}, \theta')$ is an initial 1-cell
    $(\ol{X},v(uf_{\ol{X}})) \to (\ol{Z},f_{\ol{Z}})$.  We also have
    the component of $k^Z$ at $(\ol{X},(vu)f_{\ol{X}})$.  This is an
    initial 1-cell
    \[
    (\ol{vu}, \theta_{\ol{vu}})\cn (\ol{X},(vu) f_{\ol{X}}) \to
    (\ol{Z}, f_{\ol{Z}})
    \]
    where $\theta_{\ol{vu}}$ is displayed below.
    \begin{equation}\label{Gdef-11}
      \begin{tikzpicture}[x=15mm,y=13mm,baseline={(x.base)}]
        \draw[0cell] 
        (0,0) node (y) {Z}
        (120:2) node (a) {F\ol{X}}
        (120:1) node (x) {X}
        (60:2) node (b) {F\ol{Z}}
        ;
        \draw[1cell] 
        (a) edge[swap] node {f_{\ol{X}}} (x)
        (x) edge[swap] node {vu} (y)
        (b) edge node {f_{\ol{Z}}} (y)
        (a) edge node (T) {F\ol{vu}} (b)
        ;
        \draw[2cell] 
        node[between=y and T at .65, rotate=30, font=\Large] (A) {\Rightarrow}
        (A) node[below right] {\theta_{\ol{vu}}}
        ;
      \end{tikzpicture}
    \end{equation}
    
    Composing $\theta_{\ol{vu}}$ with the associator
    \[
    a_\C^\inv \cn v(uf_{\ol{X}}) \to (uv)f_{\ol{X}}
    \]
    yields another 1-cell
    \[
    (\ol{vu}, a_\C^\inv \theta_{\ol{vu}})\cn (\ol{X}, v(uf_{\ol{X}})) \to (\ol{Z},f_{\ol{Z}}),
    \]
    and therefore there is a unique 2-cell in $\B$
    \[
    (Gv) \circ (Gu) = \ol{v} \circ \ol{u} \to \ol{vu} = G(vu)
    \]
    whose image under $F$ satisfies the ice cream cone condition for
    the triangles \eqref{Gdef-10} and \eqref{Gdef-11}.  We define
    $G^2_{v,u}$ to be this 2-cell.
  \end{itemize}
\end{definition}

\begin{proposition}\label{proposition:G-lax}
  Under the hypotheses of \cref{theorem:Quillen-A-bicat}, the
  assignment on cells defined above specifies a lax functor $G\cn \B
  \to \C$.
\end{proposition}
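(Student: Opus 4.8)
The organizing principle is that each structural 2-cell of $G$ --- the component $G\ga$ of a 2-cell $\ga$, and the components of $G^0$ and $G^2$ --- is by \cref{definition:G} the \emph{unique} 2-cell of a suitable lax slice whose source 1-cell is initial and whose target 1-cell is prescribed. Since a 2-cell out of an initial 1-cell is determined by its target, every lax functor axiom for $G$, being an equality of two composite 2-cells in $\B$, can be checked by exhibiting both composites as 2-cells of a single lax slice that share a common initial source 1-cell and a common target 1-cell; initiality then forces the two to agree. In each step the only genuine work is the verification that the two composites have matching source and target in the slice, which is an ice cream cone computation driven by functoriality of $F$, naturality of $F^2$ and $F^0$, and the bicategory axioms of $\B$ and $\C$.

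\Gstep{2a} Preservation of identities is immediate: for $\ga = 1_u$ the target 1-cell in \eqref{Gdef-3} collapses to $(\ol{u},\theta_{\ol{u}})$ itself, since $1_u * 1_{f_{\ol{X}}} = 1_{uf_{\ol{X}}}$, so $G(1_u)$ is the unique endo-2-cell of the initial 1-cell $(\ol{u},\theta_{\ol{u}})$, namely $1_{\ol{u}} = 1_{Gu}$. For vertical composites $\ga\cn u_0 \to u_1$ and $\ga'\cn u_1 \to u_2$, I would verify that $F(G\ga' \cdot G\ga)$ satisfies the ice cream cone condition \eqref{Gdef-4} defining $G(\ga'\ga)$; this unwinds using functoriality of $F$, the defining conditions of $G\ga$ and $G\ga'$, and the interchange identity $(\ga' * 1_{f_{\ol{X}}})(\ga * 1_{f_{\ol{X}}}) = (\ga'\ga) * 1_{f_{\ol{X}}}$. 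Initiality of $(\ol{u_0},\theta_0)$ then yields $G\ga' \cdot G\ga = G(\ga'\ga)$.

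\Gstep{2b} Naturality of $G^0$ holds trivially because its source functor has domain $\boldone$. For naturality of $G^2$ in a pair of 2-cells $\ga\cn u_0 \to u_1$ and $\delta\cn v_0 \to v_1$, both legs of the naturality square are 2-cells out of the composite 1-cell $(\ol{v_0}\,\ol{u_0},\theta')$, which is initial by the argument of \cref{definition:G} from hypothesis \eqref{QA-hypothesis-2}. One checks the two legs share a target by combining the defining conditions of $G^2$, $G\ga$, and $G\delta$ with naturality of $F^2$, and uniqueness completes the step.

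\Gstep{2d} For the left and right unity axioms \eqref{f0-bicat}, the source 1-cell --- $1_{GY}\circ Gu$ respectively $Gu \circ 1_{GX}$ --- is carried by a unitor isomorphism onto $(\ol{u},\theta_{\ol{u}})$ and is therefore itself initial, while both legs have target $(\ol{u},\theta_{\ol{u}})$. Matching the legs uses the definitions of $G^0$, $G^2_{1,u}$ (respectively $G^2_{u,1}$), and $G\ell$ (respectively $Gr$), together with the unitor compatibilities of \cref{bicat-left-right-unity}; uniqueness for 2-cells out of the initial source then gives equality.

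\Gstep{2c} The lax associativity axiom \eqref{f2-bicat} is the main obstacle. It equates two length-three composites built from components of $G^2$, from $Ga_\C$, and from the associator $a_\B$, viewed as 2-cells $(Gw \circ Gv)\circ Gu \to G(w(vu))$. I would first argue that the iterated composite 1-cell $(\ol{w}\,\ol{v})\,\ol{u}$, with its induced 2-cell data, is initial: this follows by applying the preservation of initial components, hypothesis \eqref{QA-hypothesis-2}, twice --- once through $F\dar v$ and once through $F\dar w$, exactly as in the double-composite case of \cref{definition:G} --- via \cref{lemma:preserves-initial-1-cells}. It then remains to show that the two legs have the same target 1-cell in $F\dar Z$, and this target-matching is precisely where the lax associativity of $F$ is used, in the form of the slice identity \eqref{slice-5-1}, together with naturality of $F^2$ and the pentagon axiom for $\C$. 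Assembling these pasting diagrams across the slices $F\dar Y$ and $F\dar Z$, mediated by the relevant change-of-slice functors, is the most delicate bookkeeping of the proof; once the targets are identified, initiality of $(\ol{w}\,\ol{v})\,\ol{u}$ closes the argument.
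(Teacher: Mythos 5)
Your proposal is correct and matches the paper's own proof in all essentials: the paper likewise reduces every axiom to uniqueness of 2-cells out of an initial 1-cell in a lax slice, handling identities and vertical composites via the ice cream cone condition, treating naturality of $G^0$ as vacuous and naturality of $G^2$ via initiality of $(\ol{v_0}\circ\ol{u_0},\theta')$, and realizing the associativity hexagon as a diagram in $F\dar Z$ whose source corner is initial by hypothesis \eqref{QA-hypothesis-2} and \cref{lemma:preserves-initial-1-cells}. The only detail you elide is that applying hypothesis \eqref{QA-hypothesis-2} twice directly yields initiality of the right-associated composite $(Gv)\,((Gu)(Gs))$, which the paper then transfers to the hexagon's left-associated source across the isomorphism $(a_\B)$ --- the same isomorphism-transfer trick you yourself invoke for the unity axioms.
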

\begin{proof}

\Gstep{2a} To verify that $G$ defines a functor $\C(X,Y) \to \B(GX,GY)$ for
each $X$ and $Y$, first note that when $\ga = 1_{u}$, then
$1_{\ol{u}}$ satisfies the ice cream cone condition above, and hence by
uniqueness of 2-cells out of an initial 1-cell, we have
\[
G1_u = \ol{(1_u)} = 1_{(\ol{u})} = 1_{Gu}.
\]
Now we turn to functoriality with respect to vertical composition of
2-cells.  Consider a pair of composable 2-cells
\[
u_0 \fto{\ga} u_1 \fto{\de} u_2
\]
between 1-cells $u_0, u_1, u_2\in \C(X,Y)$.  We will show that the
chosen lift $G(\de\ga) = \ol{\de\ga}$ is equal to the composite
\[
(G\de) \circ (G\ga) = \ol{\de} \circ \ol{\ga}.
\]
To do this, we note that $(\ol{u_0},\theta_0)$ is an initial 1-cell
and therefore we simply need to observe that $\ol{\de} \circ \ol{\ga}$
satisfies the ice cream cone condition for $\ol{\de \ga}$.  Then the
uniqueness of 2-cells from $(\ol{u_0},\theta_0)$ to
$(\ol{Y},f_{\ol{Y}})$ will imply the result.  This is done by the four
pasting diagrams below.  The first equality follows by functoriality of $F$:
we have $(F\ol{\de}) (F\ol{\ga}) = F(\ol{\de} \circ \ol{\ga})$.  The next
two equalities follow by the conditions for $\ol{\ga}$ and $\ol{\de}$
individually.
\begin{equation}\label{Gdef-5}
  \begin{tikzpicture}[x=18mm,y=18mm,baseline={(a.base)},scale=.9]
    \def\w{2.3} 
    \def\h{-2.25} 
    \def\m{.5} 
    \draw[font=\Large] (\w/2+\m,0) node (eq) {=}; 
    \newcommand{\boundary}{
      \draw[0cell] 
      (0,-.25) node (y) {Y}
      (120:2) node (a) {F\ol{X}}
      (150:1) node (x) {X}
      (60:2) node (b) {F\ol{Y}}
      ;
      \draw[1cell] 
      (a) edge[swap] node {f_{\ol{X}}} (x)
      (b) edge[bend left] node (fy) {f_{\ol{Y}}} (y) 
      (x) edge[swap, bend right=70] node {u_0} (y)
      (a) edge[bend left=70] node (T) {F\ol{u_2}} (b) 
      (a) edge[swap, bend right=70] node (B) {F\ol{u_0}} (b) 
      ;
    }
    \begin{scope}[shift={(0,\h/2)}]
      \boundary
      \draw[1cell] 
      ;
      \draw[2cell]
      node[between=y and B at .6, rotate=50, font=\Large] (TH0) {\Rightarrow}
      (TH0) node[left, shift={(-.05,.05)}] {\theta_{0}}
      node[between=B and T at .5, rotate=90, font=\Large] (Fga) {\Rightarrow}
      (Fga) node[right] {F(\ol{\de} \circ \ol{\ga})}
      ;
    \end{scope}      
    \begin{scope}[shift={(\w+\m+\m,\h/2)}]
      \boundary
      \draw[1cell] 
      (a) edge[bend right=10] node[pos=.2] (M) {F\ol{u_1}} (b) 
      ;
      \draw[2cell]
      node[between=y and B at .6, rotate=50, font=\Large] (TH0) {\Rightarrow}
      (TH0) node[left, shift={(-.05,.05)}] {\theta_{0}}
      node[between=B and T at .25, rotate=90, font=\Large] (Fga) {\Rightarrow}
      (Fga) node[right] {F\ol{\ga}}
      node[between=y and T at .85, rotate=90, font=\Large] (Fde) {\Rightarrow}
      (Fde) node[right] {F\ol{\de}}
      ;
    \end{scope}      
  \end{tikzpicture}
\end{equation}
\begin{equation}\label{Gdef-6}
  \begin{tikzpicture}[x=18mm,y=18mm,baseline={(a.base)},scale=.9]
    \def\w{2.3} 
    \def\h{-2.25} 
    \def\m{.5} 
    \draw[font=\Large] (\w/2+\m,0) node (eq) {=}; 
    \draw[font=\Large] (-\w/2-\m,0) node (eq) {=}; 
    \newcommand{\boundary}{
      \draw[0cell] 
      (0,-.25) node (y) {Y}
      (120:2) node (a) {F\ol{X}}
      (150:1) node (x) {X}
      (60:2) node (b) {F\ol{Y}}
      ;
      \draw[1cell] 
      (a) edge[swap] node {f_{\ol{X}}} (x)
      (b) edge[bend left] node (fy) {f_{\ol{Y}}} (y) 
      (x) edge[swap, bend right=70] node (u0) {u_0} (y)
      (x) edge[bend right=10] node[pos=.3] (u1) {u_1} (y) 
      (a) edge[bend left=70] node (T) {F\ol{u_2}} (b) 
      ;
      \draw[2cell] 
      node[between=u0 and u1 at .55, rotate=50, font=\Large] (ga) {\Rightarrow}
      (ga) node[above left] {\ga}
      ;
    }
    \begin{scope}[shift={(0,\h/2)}]
      \boundary
      \draw[1cell] 
      (a) edge[bend right=10] node[pos=.2] (M) {F\ol{u_1}} (b) 
      ;
      \draw[2cell] 
      node[between=y and T at .85, rotate=90, font=\Large] (Fde) {\Rightarrow}
      (Fde) node[right] {F\ol{\de}}
      node[between=y and T at .5, shift={(.05,-.1)}, rotate=50, font=\Large] (TH1) {\Rightarrow}
      (TH1) node[below, shift={(.05,-.05)}] {\theta_{1}}
      ;
    \end{scope}      
    \begin{scope}[shift={(\w+\m+\m,\h/2)}]
      \boundary
      \draw[1cell] 
      (x) edge[bend left=70] node[pos=.7] (u2) {u_2} (y) 
      ;
      \draw[2cell] 
      node[between=y and T at .5, shift={(.05,.2)}, rotate=50, font=\Large] (TH2) {\Rightarrow}
      (TH2) node[below, shift={(.05,-.05)}] {\theta_{2}}
      node[between=u0 and u1 at .55, rotate=50, font=\Large] (ga) {\Rightarrow}
      (ga) node[above left] {\ga}
      node[between=u0 and u2 at .70, rotate=50, font=\Large] (de) {\Rightarrow}
      (de) node[below right] {\de}
      ;
    \end{scope}
  \end{tikzpicture}
\end{equation}
Since $\ol{\de \ga}$ is the unique 2-cell satisfying
this condition, we must have $\ol{\de \ga} = \ol{\de} \circ \ol{\ga}$.
Therefore the definition of $G$ is functorial with respect to vertical
composition of 2-cells.

\Gstep{2b}  Naturality of $G^0$ is vacuous.  Naturality of $G^2$
follows because $(\ol{v} \circ \ol{u}, \theta')$ shown in \eqref{Gdef-10} is
initial.  Therefore given $\ga\cn u_0 \to u_1$ and $\de\cn v_0 \to v_1$, the two
composites
\[
(\ol{v_0} \circ \ol{u_0},\theta'_0) \to (\ol{v_1u_1},\theta_{\ol{v_1u_1}})
\]
are equal.

\Gstep{2c} Now we need to verify the lax associativity axiom
\eqref{f2-bicat} and two lax unity axioms \eqref{f0-bicat} for $G$.  We
show that each of the 2-cells involved is the projection to $\B$ of a
2-cell in a lax slice category, and that each composite in the
diagrams is a 2-cell whose source is initial.  Thus we conclude in
each diagram that the two relevant composites are equal.

First, let us consider the lax associativity hexagon \eqref{f2-bicat}
for $G^2$ and the associators.
Given a composable triple
\[
  W \fto{s} X \fto{u} Y \fto{v} Z
\]
we need to show that the following diagram commutes
\begin{equation}\label{Gdef-12}
  \begin{tikzpicture}[x=20mm,y=20mm,baseline=(X.base)]
    \draw[0cell] 
    (0,0) node (3) {((Gv)(Gu))\, (Gs)}
    (3) ++(60:1) node (2) {(Gv)\, ((Gu)(Gs))}
    (3) ++(-60:1) node (4) {(G(vu))\, (Gs)}
    (3) ++(3,0) node (0) {G(v\,(us))}
    (0) ++(120:1) node (1) {(Gv)\, (G(us))}
    (0) ++(-120:1) node (5) {G((vu)\, s)}
    ;
    \draw[1cell] 
    (3) edge node (X) {a_\B} (2)
    (2) edge node {1 * G^2} (1)
    (1) edge node {G^2} (0)
    (3) edge[swap] node {G^2 * 1} (4) 
    (4) edge[swap] node {G^2} (5) 
    (5) edge[swap] node {Ga_\C} (0) 
    ;
  \end{tikzpicture}
\end{equation}
where $a_\B$ and $a_\C$ denote the associators in $\B$ and $\C$
respectively. To do this, we observe that this entire diagram is the
projection to $\B$ of the following diagram in $F \dar Z$, where we
use two key details from the description in \cref{defprop:lax-slice}:
\begin{itemize}
\item The horizontal composition of 2-cells in $F\dar Z$
  (namely, the whiskering of 2-cells by 1-cells) is given by
  horizontal composition in $\B$.
\item The associator in $F \dar Z$ is given by $(a_\B)$.
\end{itemize}
\begin{equation}\label{Gdef-13}
  \begin{tikzpicture}[x=25mm,y=20mm,baseline=(X.base)]
    \draw[0cell] 
    (0,0) node (3) {
      \big(
      (\ol{v}, \theta_{\ol{v}}) (\ol{u}, 1_v * \theta_{\ol{u}})
      \big)
      \, (\ol{s}, 1_{vu} * \theta_{\ol{s}})
    }
    (3) ++(70:1) node (2) {
      (\ol{v}, \theta_{\ol{v}}) \, \big(
      (\ol{u}, 1_v * \theta_{\ol{u}}) (\ol{s}, 1_v * (1_u * \theta_{\ol{s}}))
      \big)
    }
    (3) ++(-70:1) node (4) {
      (\ol{vu}, \theta_{\ol{vu}})\, (\ol{s}, 1_{vu} * \theta_{\ol{s}})
    }
    (3) ++(3,0) node (0) {
      (\ol{v\,(us)}, \theta_{\ol{v(us)}})
    }
    (0) ++(180-70:1) node (1) {
      (\ol{v}, \theta_{\ol{v}}) \, (\ol{us}, 1_v * \theta_{\ol{us}})
    }
    (0) ++(180+70:1) node (5) {
      (\ol{(vu)\, s}, \theta_{\ol{(vu)s}})
    }
    ;
    \draw[1cell] 
    (3) edge node[pos=.4] (X) {(a_\B)} (2)
    (2) edge node {(1 * G^2)} (1)
    (1) edge node[pos=.6] {(G^2)} (0)
    (3) edge[swap] node[pos=.3] {(G^2 * 1)} (4) 
    (4) edge[swap] node {(G^2)} (5) 
    (5) edge[swap] node[pos=.7] {(\ol{a_\C})} (0) 
    ;
  \end{tikzpicture}
\end{equation}
Now the 1-cells $(\ol{v},\theta_{\ol{v}})$,
$(\ol{u},\theta_{\ol{u}})$, and $(\ol{s},\theta_{\ol{s}})$ are defined
to be components of $k^X$, $k^Y$, and $k^Z$, respectively.  We have
$(F \dar u)(\ol{s}, \theta_{\ol{s}}) = (\ol{s}, 1_u *
\theta_{\ol{s}})$ and therefore
\[
(\ol{u},\theta_{\ol{u}}) (\ol{s},1_u * \theta_{\ol{s}})
\]
is initial by hypothesis \eqref{QA-hypothesis-2} and
\cref{lemma:preserves-initial-1-cells}.  The strict functor $F \dar v$
sends this composite to
\[
(\ol{u},1_v * \theta_{\ol{u}}) (\ol{s}, 1_v * (1_u * \theta_{\ol{s}})),
\]
so the upper-left corner of the hexagon is initial by
hypothesis \eqref{QA-hypothesis-2} and \cref{lemma:preserves-initial-1-cells} again.
Since $a_\B$ is an isomorphism, this implies that
\[
\big(
(\ol{v}, \theta_{\ol{v}}) (\ol{u}, 1_v * \theta_{\ol{u}})
\big)
\, (\ol{s}, 1_{vu} * \theta_{\ol{s}})
\]
is also an initial 1-cell.  Therefore the two composites around the diagram are equal
and consequently their projections to $\B$ are equal.

\Gstep{2d} Next we consider the lax unity axioms \eqref{f0-bicat} for a 1-cell $u\cn X \to
Y$.  We use subscripts $\B$ or $\C$ to denote the respective unitors.
As with the lax associativity axiom, the necessary diagrams
are projections to $\B$ of diagrams in $F \dar Y$, each of whose
source 1-cell is initial.  Therefore the diagrams in $F \dar Y$
commute and hence their projections to $\B$ commute.
\begin{equation}\label{Gdef-14}
  \begin{tikzpicture}[x=23mm,y=18mm,baseline={(0,1)}]
    \draw[0cell] 
    (0,0) node (a) {(Gu) (1_{GX})}
    (.25,1) node (b) {(Gu) (G1_X)}
    (1.25,1) node (c) {G(u 1_X)}
    (1.5,0) node(d) {Gu}
    ;
    \draw[1cell] 
    (a) edge node[pos=.4] {1*G^0} (b)
    (b) edge node {G^2} (c)
    (c) edge node[pos=.6] {Gr_\C} (d)
    (a) edge[swap] node {r_\B} (d)
    ;
  \end{tikzpicture}
  \qquad 
  \begin{tikzpicture}[x=23mm,y=18mm, baseline={(0,1)}]
    \draw[0cell] 
    (0,0) node (a) {(1_{GY}) (Gu)}
    (.25,1) node (b) { (G1_Y) (Gu)}
    (1.25,1) node (c) {G(1_Y u)}
    (1.5,0) node(d) {Gu}
    ;
    \draw[1cell] 
    (a) edge node[pos=.4] {G^0*1} (b)
    (b) edge node {G^2} (c)
    (c) edge node[pos=.6] {G(\ell_\C)} (d)
    (a) edge[swap] node {\ell_\B} (d)
    ;
  \end{tikzpicture}
\end{equation}
This completes the proof that $G$ is a lax functor $\C \to \B$.
\end{proof}

\begin{proof}[Proof of \cref{theorem:Quillen-A-bicat}]
Now we turn to the transformations
\[
  \eta\cn \Id_\B \to GF \quad \mathrm{ and } \quad \epz\cn FG \to \Id_\C.
\]

\Gstep{3a} The components of $\epz$ are already defined in the
construction of $G$: given an object $X$, we define $\epz_X =
f_{\ol{X}}$, the 1-cell part of the inc-lax terminal object $(\ol{X},
f_{\ol{X}})$.  For a 1-cell $u$, we define $\epz_u = \theta_{\ol{u}}$,
the 2-cell part of the initial 1-cell
\[
(\ol{u},\theta_{\ol{u}}) \cn (\ol{X},u f_{\ol{x}}) \to (\ol{Y},f_{\ol{Y}}).
\]

To define the components of $\eta$, suppose $A$ and $B$ are objects of $\B$
and suppose $p\cn A \to B$ is a 1-cell between them.  Then $(A,1_A)$ defines
an object of $F \dar FA$.  Therefore there is an initial 1-cell
\[
([A],\theta_{[A]})\cn(A,1_A) \to (\ol{FA}, f_{\ol{FA}})
\]
to the inc-lax terminal object in $F \dar FA$.  We define
\[
\eta_A = [A] \cn A \to \ol{FA} = G(FA).
\]
Given a 1-cell $p\cn A \to B$ in $\B$ we have two different 1-cells
in $F \dar FB$
\[
(A,1_{FA} (Fp)) \to (\ol{FB},f_{\ol{FB}}).
\]
One of these is the composite
\begin{equation}\label{comp-1}
(\ol{Fp}, \theta_{\ol{Fp}}) \circ (F \dar Fp)(\eta_A,\theta_{\eta_A}),
\end{equation}
and note that this is initial by hypothesis \eqref{QA-hypothesis-2}
and \cref{lemma:preserves-initial-1-cells}.  The other 1-cell is the composite
\begin{equation}\label{comp-2}
(\eta_B,\theta_{\eta_B}) \circ (p,\upsilon),
\end{equation}
where $\upsilon$ denotes a composite of unitors.  The 2-cell
components of the composites \eqref{comp-1} and \eqref{comp-2} are
given, respectively, by the two pasting diagrams below.
\begin{equation}\label{Gdef-15}
  \begin{tikzpicture}[x=11.1mm,y=15mm,baseline={(F2.base)}]
    \draw[0cell] 
    (0,0) node (z) {FB}
    (90:2) node (xt) {F(\ol{FA})}
    (45:2.828) node (zt) {F(\ol{FB})}
    (135:2.828) node (xt0) {FA}
    (135:1.414) node (y) {FA}
    ;
    \draw[1cell] 
    (xt0) edge node (F1xt) {F\eta_A} (xt)
    (xt0) edge[swap] node {1_{FA}} (y)
    (y) edge[swap] node {Fp} (z)
    (xt0) edge[bend left=60, looseness=1.1] node (U) {F(\ol{Fp}\, \eta_A)} (zt)
    
    (xt) edge node (fFa) {f_{\ol{FA}}} (y)
    (zt) edge node[pos=.45] (fFb) {f_{\ol{FB}}} (z)
    (xt) edge node (T) {F(\ol{Fp})} (zt)
    ;
    \draw[2cell] 
    node[between=fFb and xt at .6, rotate=30, font=\Large] (A) {\Rightarrow}
    (A) node[below right] {\theta_{\ol{Fp}}}
    node[between=y and F1xt at .4, shift={(.108,0.08)}, rotate=30, font=\Large] (B) {\Rightarrow}
    (B) node[above left] {\theta_{\eta_A}}
    node[between=xt and U at .5, shift={(.1,0)}, rotate=90, font=\Large] (D) {\Rightarrow}
    (D) node[right] (F2) {F^2}
    ;
  \end{tikzpicture}
  \qquad
  \begin{tikzpicture}[x=11mm,y=15mm,baseline={(F2.base)}]
    \draw[0cell] 
    (0,0) node (z) {FB}
    (90:2) node (xt) {FB}
    (45:2.828) node (zt) {F(\ol{FB})}
    (135:2.828) node (xt0) {FA}
    (135:1.414) node (y) {FA}
    ;
    \draw[1cell] 
    (xt0) edge node (F1xt) {Fp} (xt)
    (xt0) edge[swap] node {1_{FA}} (y)
    (y) edge[swap] node {Fp} (z)
    (xt0) edge[bend left=60, looseness=1.1] node (U) {F(\eta_B\, p)} (zt)
    
    (xt) edge node {1_{FB}} (z)
    (zt) edge node[pos=.45] (fFb) {f_{\ol{FB}}} (z)
    (xt) edge node (T) {F(\eta_B)} (zt)
    ;
    \draw[2cell] 
    node[between=fFb and xt at .6, rotate=30, font=\Large] (A) {\Rightarrow}
    (A) node[below right] {\theta_{\eta_B}}
    node[between=y and xt at .45, rotate=30, font=\Large] (B) {\Rightarrow}
    (B) node[above left] {\upsilon}
    node[between=xt and U at .5, shift={(.1,0)}, rotate=90, font=\Large] (D) {\Rightarrow}
    (D) node[right] (F2) {F^2}
    ;
  \end{tikzpicture}
\end{equation}
Since the diagram at left in
\eqref{Gdef-15} corresponds to an initial 1-cell, we therefore have a
unique 2-cell $(G(Fp)) \eta_A \to \eta_B p$ in $\B$ whose image under
$F$ satisfies the ice cream cone condition with respect to the two
outermost triangles in \eqref{Gdef-15}.  We take $\eta_p$ to be this 2-cell.

\Gstep{3b} Naturality of the components $\epz_u$ with respect to
2-cells $\ga\cn u_0 \to u_1$ is precisely the condition in
\eqref{Gdef-4} defining $G\ga = \ol{\ga}$.  Naturality of the
components $\eta_p$ with respect to 2-cells $\om\cn p_0 \to p_1$
follows because the source 1-cell shown at left in \eqref{Gdef-15} is initial.

\Gsteps{3c}{3d} The lax transformation axioms for $\epz$ and $\eta$ follow immediately
from the inc-lax terminal conditions for $k^X$; the unit axiom follows from the unit
condition for $k^X$, and the 2-cell axiom follows from uniqueness of
2-cells out of an initial 1-cell.
\end{proof}

\section{The Whitehead Theorem for Bicategories}\label{sec:Whitehead-bicat}

In this section we apply the bicategorical Quillen Theorem A
(\ref{theorem:Quillen-A-bicat}) to prove the Bicategorical Whitehead
Theorem.

\begin{theorem}[Whitehead Theorem for Bicategories]\label{theorem:whitehead-bicat}
  A pseudofunctor of bicategories $F\cn \B \to \C$ is a biequivalence
  if and only if $F$ is
  \begin{enumerate}
  \item essentially surjective on objects;
  \item essentially full on 1-cells; and
  \item fully faithful on 2-cells.
  \end{enumerate}
\end{theorem}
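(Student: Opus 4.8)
The plan is to prove the two directions separately, with essentially all of the work in the reverse implication being supplied by \cref{theorem:Quillen-A-bicat}. For the forward implication, suppose $F$ is a biequivalence, so there is a pseudofunctor $G$ and invertible strong transformations $\Id_\B \fto{\hty} GF$ and $FG \fto{\hty} \Id_\C$. Conditions \textit{(2)} and \textit{(3)} are exactly the content of \cref{lemma:biequiv-implies-local-equiv}. For \textit{(1)}, I would apply \cref{proposition:adjoint-equivalence-componentwise} to $FG \fto{\hty} \Id_\C$: at each object $Z \in \C$ its component is an invertible $1$-cell $F(GZ) \to Z$, so every object of $\C$ lies in the adjoint-equivalence class of an object in the image of $F$, which is essential surjectivity.

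For the reverse implication, the first step is to verify the hypotheses of \cref{theorem:Quillen-A-bicat}. Since $F$ is essentially surjective, essentially full, and fully faithful, \cref{proposition:lax-slice-lax-terminal} equips each lax slice $F\dar X$ with an inc-lax terminal object, giving hypothesis \eqref{QA-hypothesis-1}; since $F$ is moreover a pseudofunctor, \cref{lemma:lax-slice-change-fiber} shows each change-of-slice functor $F\dar u$ preserves initial components, giving hypothesis \eqref{QA-hypothesis-2}. Applying \cref{theorem:Quillen-A-bicat} produces a lax functor $G\cn \C \to \B$ together with lax transformations $\eta\cn \Id_\B \to GF$ and $\epz\cn FG \to \Id_\C$. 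The remaining task is to upgrade $G$ to a pseudofunctor and $\eta,\epz$ to invertible strong transformations.

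The next step is to show $G$ is a pseudofunctor. By the constructions in \cref{definition:G}, each $G^0_X$ and each $G^2_{v,u}$ is the $\B$-projection of a $2$-cell in a lax slice whose source and target are \emph{both} initial $1$-cells: for $G^0_X$ these are the identity $1$-cell $(1_{\ol{X}}, r')$ and $(\ol{1_X}, \ell \circ \theta_{\ol{1_X}})$, while for $G^2_{v,u}$ these are $(\ol{v} \circ \ol{u}, \theta')$ and $(\ol{vu}, a_\C^\inv \theta_{\ol{vu}})$; initiality in each case follows from the inc-lax terminal condition together with \cref{lemma:preserves-initial-1-cells} and the invertibility of the relevant unitor or associator, which transports initiality along the isomorphism of slice objects. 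Since any morphism between two initial objects of a category is an isomorphism, these slice $2$-cells are invertible, and hence so are their $\B$-projections $G^0_X$ and $G^2_{v,u}$. Thus $G$, and therefore $GF$ and $FG$, are pseudofunctors.

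Finally I would show $\eta$ and $\epz$ are invertible strong transformations and invoke \cref{proposition:adjoint-equivalence-componentwise}. For $\epz$, each component $\epz_X = f_{\ol{X}}$ is an adjoint equivalence by the choice in \cref{proposition:lax-slice-lax-terminal}, and each $2$-cell $\epz_u = \theta_{\ol{u}}$ is an isomorphism by that same proposition. For $\eta$, the main obstacle is invertibility of the components $\eta_A = [A]$, and I would first establish the auxiliary fact that a pseudofunctor which is essentially full and fully faithful \emph{reflects} invertibility of $1$-cells: if $Fp$ is invertible, essential fullness yields $q$ with $Fq$ an adjoint inverse of $Fp$, whence $F(qp) \iso F1_A$ and $F(pq) \iso F1_B$, and full faithfulness on $2$-cells lifts these to isomorphisms $qp \iso 1_A$ and $pq \iso 1_B$. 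Since $F\eta_A \iso f_{\ol{FA}}^\bdot$ is invertible, reflection gives that $\eta_A$ is invertible. The $2$-cells $\eta_p$ are isomorphisms because, per \eqref{Gdef-15}, the ice cream cone condition expresses $1_{f_{\ol{FB}}} * F\eta_p$ as a composite of the isomorphisms $\theta_{\ol{Fp}}$, $\theta_{\eta_A}$, $\theta_{\eta_B}$, $F^2$, and a composite of unitors; since post-composition by the adjoint equivalence $f_{\ol{FB}}$ reflects $2$-cell isomorphisms (\cref{lemma:mate-iso}) and $F$ is fully faithful, $\eta_p$ is an isomorphism. Hence $\eta$ and $\epz$ are invertible strong transformations, and $F$ is a biequivalence by \cref{definition:biequivalence}.
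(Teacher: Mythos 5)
Your proof is correct and follows the paper's route almost exactly: the forward direction via \cref{lemma:biequiv-implies-local-equiv} together with the invertible components of $FG \fto{\hty} \Id_\C$, and the reverse direction by feeding \cref{proposition:lax-slice-lax-terminal,lemma:lax-slice-change-fiber} into \cref{theorem:Quillen-A-bicat}, then upgrading $G$ to a pseudofunctor and $\eta$, $\epz$ to invertible strong transformations, and concluding with \cref{proposition:adjoint-equivalence-componentwise}.

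The one step where you genuinely diverge is pseudofunctoriality of $G$. The paper shows that the ice cream cone conditions defining $G^0$ and $G^2$, together with invertibility of the 2-cells $\theta_{\ol{u}}$ and of $F^0$, $F^2$, force $F(G^0)$ and $F(G^2)$ to be invertible, and then uses that $F$ is fully faithful on 2-cells, hence reflects isomorphisms. You instead observe that $G^0_X$ and $G^2_{v,u}$ are the $\B$-projections of slice 2-cells whose source \emph{and} target are initial 1-cells, so they are automatically isomorphisms; this is a clean alternative that avoids invoking full faithfulness of $F$ at this step. Its only cost is the transport-of-initiality claim, which you state somewhat loosely as ``transports initiality along the isomorphism of slice objects.'' The precise justification is easy but worth recording: pasting the 2-cell component with the invertible 2-cell $\ell^\inv$ (resp.\ $a_\C$) defines an isomorphism of hom-categories
$(F\dar X)\bigl((\ol{X}, 1_X f_{\ol{X}}),(\ol{X},f_{\ol{X}})\bigr) \iso (F\dar X)\bigl((\ol{X},f_{\ol{X}}),(\ol{X},f_{\ol{X}})\bigr)$
commuting with the projections to $\B$ (the ice cream cone conditions on either side correspond), and this isomorphism carries the initial component of $k^X$ (resp.\ $k^Z$) to the 1-cell $(\ol{1_X},\ell_{f_{\ol{X}}}\circ\theta_{\ol{1_X}})$ (resp.\ $(\ol{vu}, a_\C^\inv\theta_{\ol{vu}})$); isomorphisms of categories preserve initial objects. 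With that sentence added, your argument is complete. Elsewhere you match the paper while supplying details it leaves implicit: in particular your proof that an essentially full, fully faithful pseudofunctor reflects invertibility of 1-cells is exactly the argument the paper asserts without proof, and your treatment of $\eta_p$ via \cref{lemma:mate-iso} and full faithfulness fills in the paper's ``similarly.''
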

\begin{proof}
  One implication is immediate: if $F$ is a biequivalence with inverse
  $G$, then the internal equivalence $FG \hty \Id_\C$ implies that $F$
  is essentially surjective on objects.  
  \cref{lemma:biequiv-implies-local-equiv} proves that $F$ is
  essentially full on 1-cells and fully faithful on 2-cells.
  
  If $F$ is essentially surjective, essentially full, and fully faithful, then
  \cref{proposition:lax-slice-lax-terminal,lemma:lax-slice-change-fiber}
  show that the lax slices have inc-lax terminal objects and that the
  strict functors $F \dar u$ preserve initial components.
  Therefore we apply \cref{theorem:Quillen-A-bicat} to obtain $G\cn\C
  \to \B$ together with $\epz$ and $\eta$.

  Moreover, the proof of \cref{proposition:lax-slice-lax-terminal} shows that
  the components $\epz_X = f_{\ol{X}}$ and $\epz_u = \theta_{\ol{u}}$
  are invertible.  Likewise, if the constraints $F^0$ and $F^2$ are
  invertible then the ice cream cone conditions for $F(G^0)$ and
  $F(G^2)$, together with invertibility of the $\theta_{\ol{u}}$,
  imply that $F(G^0)$ and $F(G^2)$ are invertible.  Thus $G^0$ and
  $G^2$ are invertible because $F$ is fully faithful on 2-cells and
  therefore reflects isomorphisms.  Therefore $G$ is a pseudofunctor.

  Likewise in the construction of $\eta_A$ via
  \cref{proposition:lax-slice-lax-terminal}, we note that
  $\theta_{\eta_A}$ and $f_{\ol{FA}}$ are both invertible, so
  $F\eta_A$ is invertible.  The assumption that $F$ is essentially
  surjective on 1-cells and fully faithful on 2-cells implies that $F$
  reflects invertibility of 1-cells, and therefore $\eta_A$ is
  invertible.  Similarly, the construction of $\eta_p$ under these
  hypotheses implies that $F(\eta_p)$ is invertible and hence $\eta_p$
  is invertible.

  Now $\eta$ and $\epz$ are strong transformations with invertible
  components.  Therefore by
  \cref{proposition:adjoint-equivalence-componentwise} we conclude
  that $\eta$ and $\epz$ are invertible strong transformations.  Thus
  $F$ and $G$ are inverse biequivalences.
\end{proof}





\end{document}